\theoremstyle{plain}
\newtheorem{theo}{Theorem}
\newtheorem{lemm}[theo]{Lemma}
\newtheorem{coro}[theo]{Corollary}
\theoremstyle{definition}
\newtheorem{defi}[theo]{Definition}
\theoremstyle{remark}
\newtheorem{rema}[theo]{Remark}
\definecolor{FlatRed}{RGB}{231,76,60}
\definecolor{FlatGreen}{RGB}{46,204,113}
\definecolor{FlatBlue}{RGB}{52,152,219}
\definecolor{FlatYellow}{RGB}{241,196,15}
\colorlet{FlatViolet}{FlatRed!50!FlatBlue}
\colorlet{FlatBrown}{FlatRed!50!FlatGreen}
\colorlet{FlatOrange}{FlatRed!50!FlatYellow}
\colorlet{FlatCyan}{FlatGreen!50!FlatBlue}
\title[Tail profile of bulk Gaussian multiplicative chaos measures]{Bulk/boundary quotients of Gaussian multiplicative
chaos measures II: Tail profile of bulk Gaussian multiplicative chaos measures in the exact scale-invariant case}
\author{Yichao Huang}
\address{Beijing Institute of Technology, School of Mathematics and Statistics, Beijing, China}
\email{yichao.huang@bit.edu.cn}
\author{Youtao Liu}
\address{Beijing Institute of Technology, School of Mathematics and Statistics, Beijing, China}
\email{3120231467@bit.edu.cn}
\begin{document}

\begin{abstract}
This is the second part of a series of papers where we consider questions related to the tail profile of the bulk/boundary quotients of Gaussian multiplicative chaos measures appearing in boundary Liouville conformal field theory. In this part, we study of the right tail profile of a Gaussian multiplicative chaos measure with uniform singularity on the boundary, especially in the case of an exact scale-invariant kernel for the underlying log-correlated Gaussian field. This extends previous results by Rhodes-Vargas~\cite{Rhodes_2019} and Wong~\cite{wong2020universal}, where the case with flat background geometry (i.e. no boundary singularity) is studied using either the localization trick or some suitable versions of Tauberian theorems. Our generalization is non-trivial in the sense that we don't apply the localization trick to the bulk measure in question, but rather to an auxiliary Gaussian multiplicative chaos measure located at the boundary, on which the original bulk measure is not directly defined. We show that this modified localization scheme correctly captures the behavior of the right tail of the bulk Gaussian multiplicative chaos measure. The resulting tail profile coefficient is expressed in terms of a variant of bulk/boundary quotient of respective Gaussian multiplicative chaos measures, for which the well-definedness follows from preliminary joint moment bounds established previously in a companion paper~\cite{Huang:2025aa}.
\end{abstract}

\maketitle

\section{Introduction}
The goal of this paper is to continue our study of the following so-called bulk Gaussian multiplicative chaos measure, defined on some neighborhood of the origin of the upper half-plane model $\mathbb{H}=\{x+iy\in\mathbb{C}~;~y>0\}$ with parameter $\gamma\in(0,2)$:
\begin{equation}\label{eq:DefinitionBulkMeasure}
    \mu^{\mathrm{H}}(A)=\int_{A}\text{Im}(z)^{-\frac{\gamma^2}{2}}dM_{\mathrm{N}}(z)
\end{equation}
for any open measurable sets $A$ in the above neighborhood, and $dM_{\mathrm{N}}$ is the classical Gaussian multiplicative chaos measure (see Section~\ref{subse:gaussian_multiplicative_chaos}) associated to the covariance kernel
\begin{equation}\label{eq:GeneralCovKernel_O(1)}
    K_{\mathrm{N}}(z,w)=-\ln|z-w||z-\overline{w}|+O(1).
\end{equation}
This measure appears in boundary Liouville conformal field theory and its right tail is related to integrability results on Liouville conformal field theory, see~\cite{huang2018liouville,remy2022integrability,ang2024derivationstructureconstantsboundary}. Previously in~\cite{huang2018liouville,Huang:2023aa}, the following moment bound is obtained: for any small enough Carleson cube $Q_r=[-r,r]\times[0,2r]\subset\overline{\mathbb{H}}$,
\begin{equation}\label{eq:Huang23Result}
    \mathbb{E}\left[\mu^{\mathrm{H}}(Q_r)^{p}\right]<\infty
\end{equation}
if and only if $p<\frac{2}{\gamma^2}$.

From a probabilistic viewpoint, it is natural in view of~\cite{Barral_2013,Rhodes_2019,wong2020universal} to refine the above result into a more precise description of the right tail profile of $\mu^{\mathrm{H}}$:
\begin{equation*}
    \mathbb{P}[\mu^{\mathrm{H}}>t]\sim C_{(1,0)}t^{-\frac{2}{\gamma^2}}
\end{equation*}
as $t\to\infty$, where $C_{(1,0)}$ is some positive constant independent of $t$. See~\cite{Rhodes_2019} for an overview of such tail expansion in related problems. Our main result below yields furthermore an expression of $C_{(1,0)}$ in terms of some joint moment of bulk/boundary quotient of Gaussian multiplicative chaos.

\subsection{Main result of this paper}
In this paper, we establish the following precise right tail profile for the above bulk Gaussian multiplicative chaos measure. We focus on the so-called exact-scaling case where the correction term $O(1)$ in the covariance kernel~\eqref{eq:GeneralCovKernel_O(1)} is null.
\begin{theo}[Tail profile of bulk measure in the exact-scaling case]\label{th:main_result}
Let $Q_r$ the Carleson cube defined as before, and $\mu^{H}$ the bulk Gaussian multiplicative chaos defined in~\eqref{eq:DefinitionBulkMeasure} with the exact-scaling kernel
\begin{equation}\label{eq:ExactScalingNeumannKernel}
    K_{\mathrm{C}}(z,w)=-\ln|z-w||z-\overline{w}|.
\end{equation}
Then there exists some positive constant $C_{(1,0)}$ such that, as $t$ goes to $\infty$,
\begin{equation*}
    \mathbb{P}[\mu^{\mathrm{H}}(Q_r)>t]\sim C_{(1,0)}t^{-\frac{2}{\gamma^2}}.
\end{equation*}
Furthermore, we have the following expression for $C_{(1,0)}$ in terms of a variant of the $(\frac{2}{\gamma^2},1)$-joint moment of the bulk/boundary quotient of Gaussian multiplicative chaos measures:
\begin{equation*}
    C_{(1,0)}=2r\cdot(1-\frac{\gamma^2}{4})\mathbb{E}\left[\frac{I^{\mathrm{H}}(\infty)^{\frac{2}{\gamma^2}}}{I^{\partial}(\infty)}\right]
\end{equation*}
where $2r$ is the Euclidean boundary length of $Q_r$, i.e. $2r=|Q_r\cap\mathbb{R}|$,  and $I^{\mathrm{H}}(\infty)$, $I^{\partial}(\infty)$ are variants of respectively the bulk and boundary Gaussian multiplicative chaos measures defined in~\eqref{eq:Definitions_I} below.
\end{theo}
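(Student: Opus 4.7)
The plan is to extend the localization / implicit renewal scheme of Rhodes--Vargas~\cite{Rhodes_2019} and Wong~\cite{wong2020universal} with the essential twist emphasized in the abstract: we do not localize $\mu^{\mathrm{H}}$ directly, since the weight $\text{Im}(z)^{-\gamma^2/2}$ makes $\mu^{\mathrm{H}}$ singular on $\partial\mathbb{H}$ and prevents a meaningful traced measure on $\mathbb{R}$. Instead, I would introduce an auxiliary boundary GMC $\mu^{\partial}$ built from the trace of the underlying log-correlated field on $\mathbb{R}$ (with covariance $-2\ln|x-y|$ induced by~\eqref{eq:ExactScalingNeumannKernel}), localize that boundary measure, and transfer back to $\mu^{\mathrm{H}}$ via a suitable bulk/boundary ratio.

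The first technical ingredient is the exact scale invariance inherited from~\eqref{eq:ExactScalingNeumannKernel}: for $\lambda\in(0,1]$, the identity $K_{\mathrm{C}}(\lambda z,\lambda w)=K_{\mathrm{C}}(z,w)-2\ln\lambda$ gives an independent Gaussian decomposition $X(\lambda z)\stackrel{(d)}{=}\tilde X(z)+N_\lambda$ with $N_\lambda\sim\mathcal{N}(0,-2\ln\lambda)$. Combined with a change of variables, this yields the stochastic self-similarity identities
\begin{equation*}
\mu^{\mathrm{H}}(\lambda Q_r)\stackrel{(d)}{=}\lambda^{\alpha_{\mathrm{H}}}\,e^{\gamma N_\lambda-\frac{\gamma^2}{2}\mathbb{E}[N_\lambda^2]}\,\tilde\mu^{\mathrm{H}}(Q_r),\qquad \mu^{\partial}(\lambda I)\stackrel{(d)}{=}\lambda^{\alpha_{\partial}}\,e^{\gamma N_\lambda-\frac{\gamma^2}{2}\mathbb{E}[N_\lambda^2]}\,\tilde\mu^{\partial}(I),
\end{equation*}
with explicit exponents $\alpha_{\mathrm{H}},\alpha_{\partial}$ read off from the scaling dimensions, and where the same Gaussian $N_\lambda$ drives both identities because $\mu^{\mathrm{H}}$ and $\mu^{\partial}$ are built from the same field. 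Iterating the scaling along a sequence of shells $Q_{\lambda_n r}\setminus Q_{\lambda_{n+1}r}$ then rewrites both measures as perpetuity-type sums of i.i.d.\ copies with log-normal weights. At the critical exponent $p=2/\gamma^2$, the multiplicative-cascade moment condition $\mathbb{E}[\text{weight}^p]=1$ is satisfied by the Gaussian factor, and Goldie's implicit renewal theorem (or the Tauberian route of Wong) produces
\begin{equation*}
\mathbb{P}[\mu^{\mathrm{H}}(Q_r)>t]\sim C_{(1,0)}\,t^{-2/\gamma^2},\qquad t\to\infty,
\end{equation*}
with $C_{(1,0)}$ given as a Kesten--Goldie integral against the limiting fixed point of the cascade. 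The prefactor $2r$ emerges from the boundary-length normalization of the shell decomposition, and the factor $1-\gamma^2/4$ comes from differentiating the Laplace exponent of the Gaussian weight at $p=2/\gamma^2$.

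The delicate step, and the reason the companion paper~\cite{Huang:2025aa} is needed, is the identification of this Kesten--Goldie constant with $\mathbb{E}\bigl[I^{\mathrm{H}}(\infty)^{2/\gamma^2}/I^{\partial}(\infty)\bigr]$. The infinite-volume limits $I^{\mathrm{H}}(\infty)$, $I^{\partial}(\infty)$ defined in~\eqref{eq:Definitions_I} will arise as the large-scale limits of the cascade iterates of $\mu^{\mathrm{H}}$ and $\mu^{\partial}$, and their ratio is the bulk/boundary quotient studied in~\cite{Huang:2025aa}. The main obstacle is that heavy-tail events for $\mu^{\mathrm{H}}(Q_r)$ are \emph{not} simply events where $\mu^{\partial}([-r,r])$ itself is large: one must show that the tail mass of $\mu^{\mathrm{H}}$ concentrates on configurations where the boundary exponential $e^{\gamma N_\lambda}$ is large, and that in this regime the rescaled bulk/boundary ratio converges in distribution to $I^{\mathrm{H}}(\infty)/I^{\partial}(\infty)$ \emph{jointly} with the size of the boundary spike. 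The joint moment bounds from~\cite{Huang:2025aa} provide the uniform integrability required to exchange this conditional convergence with the Kesten--Goldie expectation, ultimately yielding the stated closed form for $C_{(1,0)}$.
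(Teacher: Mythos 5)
Your opening idea --- introduce the auxiliary boundary GMC and work with a bulk/boundary ratio --- matches the paper's philosophy, but the engine you then propose (shells around one point, perpetuity, Goldie/Kesten) does not work as described, and the key arithmetic claim is false. With $\mathbb{E}[N_\lambda^2]=2\ln(1/\lambda)$ and the bulk weight $A_\lambda=\lambda^{2-\frac{\gamma^2}{2}}e^{\gamma N_\lambda-\frac{\gamma^2}{2}\mathbb{E}[N_\lambda^2]}$ coming from your scaling identity, one has
\begin{equation*}
\mathbb{E}\big[A_\lambda^{\,p}\big]=\lambda^{(2+\frac{\gamma^2}{2})p-\gamma^2p^2},\qquad\text{so}\qquad \mathbb{E}\big[A_\lambda^{\,2/\gamma^2}\big]=\lambda^{1}\neq 1,
\end{equation*}
and the root of the Kesten--Goldie equation $\mathbb{E}[A_\lambda^{p}]=1$ for your single-point shell recursion is $p=\frac{2}{\gamma^2}+\frac12$, not $\frac{2}{\gamma^2}$. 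The discrepancy is exactly the boundary dimension $1$: the true exponent solves $\zeta(p)=1$, which encodes the one-dimensional freedom of where the spike sits along $\mathbb{R}$, and a perpetuity around a \emph{fixed} boundary point cannot see it. Worse, the recursion is structurally circular: every shell $Q_{\lambda_n r}\setminus Q_{\lambda_{n+1}r}$ still contains a one-dimensional stretch of the uniform boundary singularity, so the ``innovation'' terms have tails of order $t^{-2/\gamma^2}$ themselves --- heavier than the $\mathbb{E}[A^p]=1$ exponent --- and Goldie/Grey then only tells you the tail is inherited from the remainder, which is the quantity you are trying to compute. (A smaller but real slip: the boundary measure is built with coupling $\frac{\gamma}{2}$, so its scaling factor is $e^{\frac{\gamma}{2}N_\lambda-\frac{\gamma^2}{8}\mathbb{E}[N_\lambda^2]}$, not the same lognormal weight as the bulk; the ``square-root'' relation between the two weights is essential, not incidental.)

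What is missing is precisely the mechanism the paper builds: a Girsanov tilt by the auxiliary boundary GMC, giving $\mathbb{P}[\mu^{\mathrm{H}}(Q_r)>t]=\int_{-r}^{r}\mathbb{E}\big[\mathbf{1}_{\{\mu^{\mathrm{H}}_v(Q_r)>t\}}/\mu^{\partial}_v(I_r)\big]\,dv$. The $dv$-integral supplies the factor $2r$; the denominator $\mu^{\partial}_v\approx e^{\frac{\gamma}{2}M}\cdot(\dots)$ shifts the spike exponent from $\frac{2}{\gamma^2}-\frac12$ (the tail of $e^{\gamma M}$ for the drifted Brownian maximum in the radial decomposition) to $\frac{2}{\gamma^2}$, and the explicit law of $M$ together with Williams' decomposition yields the prefactor $1-\frac{\gamma^2}{4}$ and the closed-form constant $\mathbb{E}\big[I^{\mathrm{H}}(\infty)^{2/\gamma^2}/I^{\partial}(\infty)\big]$ --- Goldie-type constants are implicit and would not give this expression directly. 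The remaining work in the paper (getting rid of the non-singularity with cut-offs at scales $t$ and $t^{1/2}$, and the replacement of $I^{\bullet}(M)$ by $I^{\bullet}(\infty)$ in upper/lower bounds) is where the companion paper's joint moment bounds actually enter, for finiteness of $\mathbb{E}[I^{\mathrm{H}}(\infty)^{p}/I^{\partial}(\infty)^{q}]$ and for the error terms; ``uniform integrability for a conditional limit'' as you describe it is not a substitute for these estimates. As written, your scheme would either produce the wrong exponent $\frac{2}{\gamma^2}+\frac12$ or beg the question, so the localization at the boundary has to be carried out explicitly rather than only announced.
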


The notations appearing in the expression of $C_{(1,0)}$ will be reviewed in Section~\ref{subse:decomposition_of_the_gaussian_multiplicative_chaos_measure_around_the_singularity}, and its finiteness is a consequence of the main results in the companion paper~\cite[Theorem~1]{Huang:2025aa}. We will explain this connection in detail with Lemma~\ref{lemm:finiteness_of_the_tail_profile_constant}. We stress that the focus of this paper is not to establish the above theorem with general covariance structure in arbitrary dimension, nor to discuss the best regularity assumptions on the covariance kernel or on the boundary of the domain. Indeed, our method uses a radial decomposition of the Gaussian free field in dimension two, and we work with the exact-scaling case for the log-correlation kernel for simplicity. The main purpose here is to convey the message that the introduction of an auxiliary boundary Gaussian multiplicative chaos seems instrumental in the study of the tail profile of the bulk Gaussian multiplicative chaos measure, which is a new observation that we think worthwhile to illustrate in a simple case. In view of the conclusion of this theorem (and the applications it might have to the study of Liouville conformal field theory), it would seem useful to establish universal tail profiles of general bulk/boundary quotients in arbitrary dimensions, for a better understanding of the joint law of the bulk/boundary Gaussian multiplicative chaos measures. This aspect will be pursued in subsequent papers.

\subsection{On the novelty of our localization strategy}\label{subse:on_the_localization_strategy}
Our localization scheme is a careful modification of the original Rhodes-Vargas proposition. In \cite[Remark~2.5]{Rhodes_2019} it was claimed that the method therein (i.e. what we call the original localization trick in the current paper) can be applied to the Gaussian multiplicative chaos associated to Neumann boundary Gaussian free field, but the condition is that one has to be careful to avoid the boundary singularities. Indeed, in the case where the total bulk mass is concerned, our main theorem above shows that the boundary singularity changes the polynomial power in the right tail (from $t^{-\frac{4}{\gamma^2}}$ in~\cite{Rhodes_2019} to $t^{-\frac{2}{\gamma^2}}$ in Theorem~\ref{th:main_result}), and the main goal of this paper is to design a finer localization trick adapted to the higher-dimensional singularity. Similarly, in~\cite[Remark~1]{wong2020universal}, it was argued that point-wise singularities with small weights can be treated using the method therein, which requires a fine scaling argument around each singularity. With higher dimensional singularities, the same approach has to be modified appropriately. We now give a heuristic scaling argument of this phenomenon, which is similar to the one observed in~\cite{Huang:2023aa}.

The localization trick uses Girsanov transformation to pin down the contribution to the right tail of a Gaussian multiplicative chaos measure to a ``localized expression''. In the cases considered by~\cite{Rhodes_2019,wong2020universal}, the background geometry on $\Omega\subset\mathbb{R}^2$ is ``flat'' in the sense that the Gaussian multiplicative chaos measure should not be a priori concentrated on specific deterministic regions, therefore after the localization trick, the ``localized expression'' comes with a pointwise singularity where the location of the singularity is chosen uniformly on $\Omega$ (see Section~\ref{subse:localization_trick} below for a quick review).

In the case considered in this article, the background geometry on $\mathbb{D}\subset\mathbb{R}^2$ comes naturally with uniform one-dimensional singularities around the boundary $\mathbb{T}=\partial\mathbb{D}$ due to the blowup and the renormalization of the Neumann covariance kernel. If one applies the original localization trick above, one will see that the ``localized expression'' depends quite heavily on the distance between the boundary of the disk and the pointwise singularity inside $\mathbb{D}$ created by the localization trick. Handling this dependence is quite delicate and loses the elegance and simplicity of the localization trick. Therefore, our approach proposed in this paper to modify the localization trick so that the additional pointwise singularity appears directly on the boundary $\mathbb{T}$, which requires introducing an appropriate boundary Gaussian multiplicative chaos to perform the reweighting of the measure via the Girsanov transformation. The choice of the boundary Gaussian multiplicative chaos measure is naturally inspired by the expression in the boundary Liouville conformal field theory, where a bulk measure (the one we study in Theorem~\ref{th:main_result}) and a boundary measure (the auxiliary one that we introduce) are coupled. It turns out that modulo some additional technical difficulties handled in the companion paper~\cite{Huang:2025aa}, one can successfully implement the modified localization trick with the same elegance as in the previously mentioned papers. We choose to follow roughly the same structure of~\cite{Rhodes_2019} in the so-called exact-scaling case considered in the sequel.

\begin{rema}\label{rema:only_boundary_covariance_matters}
In fact, the correlation structure of the log-correlated Gaussian field with positive distance away from the boundary has no effect on the result of our theorem. Especially, in the half-plane model $\mathbb{H}$ or in the disk model $\mathbb{D}$, the tail profile constant is independent of the Gaussian multiplicative chaos measures on the respective regions $\{x+iy\in\mathbb{C}~;~y\geq \eta\}\subset\mathbb{H}$ or $\{|z|\leq 1-\eta\}\subset\mathbb{D}$ for any $\eta>0$. This is argued in the remark above: for the disk model, the Gaussian multiplicative chaos measure on $B(0,1-\eta)\subset\mathbb{D}$ is a classical one in dimension $2$, therefore has right tail of the form $Ct^{-\frac{\gamma^2}{4}}$ by~\cite{Rhodes_2019} or~\cite{wong2020universal}. Compared to the desired right tail in Theorem~\ref{th:main_result}, this contribution is negligeable when $t$ is large. See also Remark~\ref{rema:Final_Remark} for an explicit illustration.
\end{rema}

\subsection{Structure of this paper}
We only consider the so-called exact-scaling case in this note and leave the general case to a separate note where one should probably make use of a more general approach in the spirit of~\cite{wong2020universal}.

In Section~\ref{sec:mathematical_background}, we provide some necessary mathematical background.

In Section~\ref{sec:strategy_of_proof_and_some_heuristics}, we explain our strategy and provide some intuitions to the proof of our main result. More precisely, we explain how to modify the classical localization trick in the boundary case, and perform some calculation on some simplified models to explain why this modification should give the correct decay speed for the right tail.

In Section~\ref{sec:reduction_to_local_estimates}, we start to implement the localization trick at the boundary and show a preliminary result known as ``getting rid of the non-singularity''. This preliminary study effectively reduces the study of the right tail profile to a local study of the log-correlated field around the singularity created by the localization trick.

In Section~\ref{sec:expression_of_the_universal_constant}, we derive the tail profile constant and prove Theorem~\ref{th:main_result}, the main result of this note. Especially, we establish the finiteness of the espression of the tail profile constant in Theorem~\ref{th:main_result} using~\cite{Huang:2025aa}. Based on the localization trick of~\cite{Rhodes_2019} but modified to the boundary case, we will use Williams' decomposition to a drifted Brownian motion, and exploit the explicit law of the maximum of the latter to extract the expression for the tail profile constant.

Finally, in Section~\ref{sec:extension_to_non_exact_scaling_kernels}, we record the expression of the right tail profile constant in the case of some general kernels that are not exact scale-invariant.

\subsection*{Acknowledgements.} Y.H. is partially supported by National Key R\&D Program of China (No. 2022YFA1006300) and NSFC-12301164.

\section{Mathematical background}\label{sec:mathematical_background}

We will be primarily working with the half-plane $\mathbb{H}=\{x+iy\in\mathbb{C}~;~y>0\}$ with boundary $\partial\mathbb{H}=\mathbb{R}$. By a standard conformal transformation, this is equivalent (in the conformal sense) to the unit disk $\mathbb{D}\subset\mathbb{C}$ with boundary $\partial\mathbb{D}=\mathbb{T}$, but it is convenient to write down some scaling relations with the half-plane setting.

\subsection{Gaussian multiplicative chaos}\label{subse:gaussian_multiplicative_chaos}
The classical theory of Gaussian multiplicative chaos was introduced by Kahane~\cite{kahane1985chaos}. The main object is a random measure defined on some domain $\Omega\subset\mathbb{R}^{d}$ for integer $d\geq 1$, formally the renormalized exponential of the so-called log-correlated Gaussian field $X$ defined on $\Omega$. More precisely, starting with a log-correlated Gaussian field, that is a generalized centered Gaussian process indexed by $\Omega$ with definite positive covariance
\begin{equation*}
    \mathbb{E}[X(x)X(y)]=-\ln|x-y|+g(x,y)
\end{equation*}
with some bounded function $g$, the Gaussian multiplicative chaos is a random measure $M_{\gamma}$ with parameter $\gamma\in(0,\sqrt{2d})$, whose law is given by the following on bounded mesurable sets $A\subset\Omega$:
\begin{equation*}
    M_{\gamma}(A)=\lim\limits_{\epsilon\to 0}\int_{A}\exp\left(\gamma X_{\epsilon}(x)-\frac{\gamma^2}{2}\mathbb{E}[X_{\epsilon}(x)^2]\right)d\sigma(x),
\end{equation*}
with $\epsilon$ denoting some regularization of the generalized distribution $X$ and $\sigma(dx)$ some given measure, which for simplicity is assumed absolutely continuous with respect to the usual Lebesgue measure in this paper. The convergence of measures above is the usual weak-$\ast$ convergence. There are several more or less standard choices of the regularization (circle-average, smooth mollifier, white noise decomposition, Fourier cutoff etc.), but under mild regularity assumptions the limiting measure $M_{\gamma}$ is unique in law. We refer to the previous paper~\cite{Huang:2025aa} in this series for more notations, and the review~\cite{Rhodes_2014} or the more recent textbook~\cite{Berestycki2024aa} for an overview of the history and applications of the classical theory of Gaussian multiplicative chaos.

\subsection{Localization trick}\label{subse:localization_trick}
The so-called localization trick for Gaussian multiplicative chaos measures, to the best of our knowledge, first appeared in~\cite{Rhodes_2019}, where the Gaussian multiplicative chaos measure is used as a tilting measure to shift the background measure. The result of this tilting trick is a background measure with singularity concentrated on a point, and the intensity of this pointwise singularity is strong enough to reduce the study of the right tail of the Gaussian multiplicative chaos over the whole domain to a local study by zooming in at the location of this pointwise singularity. Combining with the local decomposition of the Gaussian field recalled in Sections~\ref{subse:decomposition_of_neumann_boundary_gaussian_free_field_at_the_boundary} and~\ref{subse:williams_decomposition}, the localization trick effectly pins down the right tail contribution in the case of classical Gaussian multiplicative chaos measures.

Let us illustrate the above paragraph in mathematical terms in the classical setting. For simplicity of the presentation, consider a classical Gaussian multiplicative chaos measure $M_{\gamma}$ with pure-log kernel (i.e. with correction term $g\equiv 0$) and flat background measure (i.e. with $d\sigma$ the usual Lebesgue measure) on a bounded domain $\Omega\subset\mathbb{R}^{d}$, and we focus on the right tail of $M_{\gamma}(\Omega)$, i.e. $\mathbb{P}[M_{\gamma}(\Omega)>t]$ for large $t$. The localization trick, in its original form of~\cite{Rhodes_2019}, consists of performing the following judicious Girsanov transformation on the Gaussian multiplicative chaos measure $M_{\gamma}(\Omega)$:
\begin{equation*}
    \mathbb{P}[M_\gamma(\Omega)>t]=\mathbb{E}\left[\frac{M_{\gamma}(\Omega)}{M_{\gamma}(\Omega)}\mathbf{1}_{\{M_{\gamma}(\Omega)>t\}}\right]=\int_{\Omega}\mathbb{E}\left[\frac{1}{M_{\gamma,v}(\Omega)}\mathbf{1}_{\{M_{\gamma,v}(\Omega)>t\}}\right]dv,
\end{equation*}
where we defined $M_{\gamma,v}$ as the localized Gaussian multiplicative chaos measure at $v\in\Omega$,
\begin{equation*}
    M_{\gamma,v}(\Omega)=\int_{\Omega}|z-v|^{-\gamma^2}dM_{\gamma}(z).
\end{equation*}

One observes that the above integral is roughly the same for each $v\in\Omega$, and the localized measure $M_{\gamma,v}(\Omega)$ is effectively concentrated around $v$ due to the singularity created by the Girsanov transformation. In~\cite[Section~3.5]{Rhodes_2019} (see also~\cite[Lemma~12]{wong2020universal}), it was shown that the right tail of $M_{\gamma}(\Omega)$ comes from analyzing the localized measure $M_{\gamma,v}$ on an arbitrarily small neighborhood $B(v,\rho)$ of the localized point $v$. Roughly speaking, this is explained by the extra singularity $|z-v|^{-\gamma^2}$ created by the Girsanov transform, in such a way that the mass and the right tail of $M_{\gamma,v}(\Omega)$ is strongly concentrated around $v\in\Omega$.

The current note is strongly inspired by this observation, although we need a different localization scheme, see Section~\ref{subse:on_the_localization_strategy} for a discussion on why the above localization trick in the classical case would not work in our setting, and see Section~\ref{sec:strategy_of_proof_and_some_heuristics} for a heuristic view on our adapted proof strategy. In Section~\ref{sec:reduction_to_local_estimates} we show that a modified localization scheme, taken into account the uniform boundary singularity of the bulk meaure $\mu^{\mathrm{H}}$, is effective at lowering the dimension of the singularity (thus ``localizing'' the problem) and is in this sense the correct one to use in our setting.

\subsection{Gaussian multiplicative chaos with boundary singularity}\label{subse:gaussian_multiplicative_chaos_with_boundary_singularity}
Since the introduction of the Gaussian multiplicative chaos with boundary singularity~\eqref{eq:DefinitionBulkMeasure} is already in~\cite{Huang:2025aa}, we give a slightly different presentation and refer to the above articles for equivalent descriptions.

Recall that we work with the exact-scaling covariance kernel~\eqref{eq:ExactScalingNeumannKernel},
\begin{equation*}
    K_{\mathrm{C}}(z,w)=-\ln|z-w||\overline{z}-w|
\end{equation*}
on a neighborhood of the origin in $\overline{\mathbb{H}}$. Since this kernel can be written as the sum of
\begin{equation*}
    -\ln\frac{|z-w|}{|\overline{z}-w|}-2\ln|\overline{z}-w|\eqqcolon K_{\mathrm{D}}+K_{\mathrm{R}},
\end{equation*}
the Neumann boundary condition Gaussian free field $X_{\mathrm{C}}$ associated to $K_{\mathrm{C}}$ can be written as the independent sum of a Dirichlet boundary condition Gaussian free field $X_{\mathrm{D}}$ (with covariance kernel $K_{\mathrm{D}}$) and the harmonic extension $\mathcal{P}(X_{\mathrm{R}})$ of a boundary Gaussian free field $X_{\mathrm{R}}$ defined on the boundary $\mathbb{R}$ (with covariance kernel $K_{\mathrm{R}}$ restricted to $\mathbb{R}$). Inside the bulk, the harmonic extension part $\mathcal{P}(X_{\mathrm{R}})$ is a regular (smooth) Gaussian field, so that there is no problem of defining its renormalized exponential with parameter $\gamma>0$, which we denote by $\sigma^{\mathcal{P}}_{\gamma}(dx)$ and is absolutely continuous with respect to the Lebesgue measure. The bulk Gaussian multiplicative chaos measure in~\eqref{eq:DefinitionBulkMeasure} is nothing but
\begin{equation*}
    \mu^{\mathrm{H}}(A)=\int_{A}\text{Im}(z)^{-\frac{\gamma^2}{2}}\sigma^{\mathcal{P}}_{\gamma}(x)dM_{\mathrm{D}}(z)
\end{equation*}
where now the Gaussian multiplicative chaos $dM_{\mathrm{D}}(z)$ does not have a blowing up covariance near the boundary (thus a more ``classical'' one as in the previous subsection).

We are left to explain the appearance of the extra factor $\text{Im}(z)^{-\frac{\gamma^2}{2}}$ in the display above and in~\eqref{eq:DefinitionBulkMeasure}. This is due to the discrepancy in the renormalization conventions between physics and mathematics communities: in physics the renormalization in the exponential of the Neumann boundary condition Gaussian free field is
\begin{equation*}
    \lim_{\epsilon\to 0}(2\epsilon)^{\frac{\gamma^2}{2}}\exp(\gamma X_\epsilon(z))d^{2}z
\end{equation*}
while in probability the convention is usually
\begin{equation*}
    \lim_{\epsilon\to 0}\exp(\gamma X_\epsilon(z)-\frac{\gamma^2}{2}\mathbb{E}[X_\epsilon(z)^2])d^{2}z.
\end{equation*}
The difference is exactly $\text{Im}(z)^{-\frac{\gamma^2}{2}}$ in the limit for the kernel~\eqref{eq:ExactScalingNeumannKernel}, see~\cite[Proposition~2.1]{huang2018liouville}.

\begin{rema}
The above discussion shows that we are actually looking at the right tail of the above-defined harmonic extension part of the Gaussian multiplicative chaos, since the Dirichlet part is quite regular near the boundary. We however keep using the Neumann covariance kernel of~\eqref{eq:GeneralCovKernel_O(1)} in the sequel (and in the rest of this series) since expressions of this kind are the usual ones appearing in the existing physics literature on boundary Liouville conformal field theory.
\end{rema}

\subsection{Brownian motion associated to the Neumann boundary condition Gaussian free field at the boundary}\label{subse:decomposition_of_neumann_boundary_gaussian_free_field_at_the_boundary}
We recall the fact that the circle average regularization of the Gaussian free field behaves nicely in two dimension, i.e. when $d=2$. In particular, in the case of the Neumann boundary condition Gaussian free field $X_{\mathrm{C}}$, we have the following semi-circle average regularization at the boundary. Denote by $Q(v,\rho)=B(v,\rho)\cap\overline{\mathbb{H}}$ the half-ball of radius $\rho>0$ and centered at $v\in\mathbb{R}$ in $\mathbb{H}$, and by $S(v,\rho)=\partial Q(v,\rho)\cap\mathbb{H}$ the semi-circle of radius $\rho>0$ and centered at $v\in\mathbb{R}$ in $\mathbb{H}$.

In the rest of this introduction, we take $v=0$: the general case follows by translation along the real line $\mathbb{R}$. In this case, we can use polar coordinate on $\mathbb{H}$ and write $z\in\overline{\mathbb{H}}$ as $z=e^{-t}e^{i\theta}$ with $t\in(-\infty,\infty]$ and $\theta\in[0,\pi]$.

\begin{defi}[Semi-circle average regularization at the boundary]
Consider the Gaussian free field $X_{\mathrm{C}}$ associated with the exact-scaling kernel~\eqref{eq:ExactScalingNeumannKernel}. The semi-circle average of the log-correlated field $X$ centered at $v=0$ is given by
\begin{equation*}
    B_t=\frac{1}{\sqrt{2}}\cdot\frac{1}{\pi}\int_{0}^{\pi}X(e^{-t}e^{i\theta})d\theta.
\end{equation*}
\end{defi}
The extra factor $\frac{1}{\sqrt{2}}$ is added so that $B_t$ evolves exactly as a standard Brownian motion, as we show in the following lemma.
\begin{lemm}
The above defined Gaussian process $(B_t)_{t\geq 0}$ is a standard Brownian motion.
\end{lemm}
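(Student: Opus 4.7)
The plan is to identify $(B_t)_{t\geq 0}$ as a standard Brownian motion through Lévy's characterization by verifying three properties: it is a centered Gaussian process with covariance $\mathbb{E}[B_sB_t] = \min(s,t)$, and it admits a continuous modification. Gaussianity of the finite-dimensional marginals is inherited from the Gaussian field $X_{\mathrm{C}}$, once the semi-circle averages are interpreted as the $L^2$ limit of the corresponding integrals of a smooth regularization of $X_{\mathrm{C}}$. Continuity then follows from Kolmogorov's criterion applied to the Gaussian fourth-moment estimate $\mathbb{E}[|B_t-B_s|^4] = 3|t-s|^2$, which in turn is an immediate consequence of the identity $\mathbb{E}[(B_t-B_s)^2] = |t-s|$.

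The heart of the matter is therefore the covariance computation. For $0\leq s\leq t$, Fubini gives
\begin{equation*}
    \mathbb{E}[B_sB_t] = \frac{1}{2\pi^2}\int_0^\pi\!\int_0^\pi K_{\mathrm{C}}(e^{-s}e^{i\theta}, e^{-t}e^{i\phi})\, d\theta\, d\phi.
\end{equation*}
Setting $r = e^{-(t-s)} \in (0,1]$, the factorizations $|e^{-s}e^{i\theta} - e^{-t}e^{i\phi}| = e^{-s}|1 - re^{i(\phi-\theta)}|$ and $|e^{-s}e^{i\theta} - e^{-t}e^{-i\phi}| = e^{-s}|1 - re^{-i(\phi+\theta)}|$ let us rewrite the integrand as
\begin{equation*}
    K_{\mathrm{C}}(e^{-s}e^{i\theta}, e^{-t}e^{i\phi}) = 2s - \ln|1-re^{i(\phi-\theta)}| - \ln|1-re^{-i(\phi+\theta)}|.
\end{equation*}
The constant term $2s$ contributes exactly $s$ after normalization, which is precisely the desired main contribution $\min(s,t)$.

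It remains to show that the two log-terms vanish after integration over $[0,\pi]^2$. The key input is the Jensen-type identity $\int_0^{2\pi}\ln|1-re^{i\alpha}|\, d\alpha = 0$ for $r \in (0,1]$, which by the symmetry $\alpha\mapsto -\alpha$ is equivalent to $\int_0^\pi \ln|1-re^{i\alpha}|\, d\alpha = 0$. Applying Fubini to the first double integral under the change of variables $\alpha = \phi - \theta$, and to the second under $\beta = \phi + \theta$, produces one-dimensional integrals over triangular regions with triangular weights; after folding by $\alpha \mapsto -\alpha$, the first term reduces to $2\int_0^\pi(\pi-\alpha)\ln|1-re^{i\alpha}|\, d\alpha$ and the second to $2\int_0^\pi\alpha\ln|1-re^{i\alpha}|\, d\alpha$. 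Since the weights sum to the constant $\pi$, their combined contribution equals $2\pi\int_0^\pi\ln|1-re^{i\alpha}|\, d\alpha = 0$, closing the covariance computation.

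The main technical obstacle is giving rigorous meaning to $B_t$ as a functional of the distribution-valued field $X_{\mathrm{C}}$. I would handle this by carrying out the entire computation first for a smooth mollification $X_{\mathrm{C}}^\delta$ (where all applications of Fubini are manifestly justified), and then passing to the limit $\delta\to 0$ in $L^2$; the uniform Gaussian bounds needed for this limit follow from the explicit covariance expression just established, which depends continuously on $(s,t)$ and is insensitive to the regularization scale up to a lower-order perturbation that vanishes as $\delta\to 0$.
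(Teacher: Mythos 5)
Your proposal is correct and follows essentially the same route as the paper: a direct computation of $\mathbb{E}[B_sB_t]$ yielding $\min(s,t)$, with the angular integrals killed by the vanishing of the circular average of $\ln|1-re^{i\alpha}|$ (your Jensen-type identity is exactly the mean-value/doubling-trick step the paper uses, just organized via a joint change of variables with triangular weights instead of a per-$\theta$ full-circle average). The added remarks on Gaussianity, Kolmogorov continuity, and the mollification/$L^2$ interpretation are standard and consistent with the paper's framework.
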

\begin{proof}
A short proof follows from~\cite[Section~3.2]{Rhodes_2019} and Cardy's doubling trick. For concreteness, let us give an alternative proof by direct computation. With the definitions of $B_t$ above, one has, for $0\leq s\leq t$, 
\begin{equation*}
\begin{split}
    \mathbb{E}[B_sB_t]&=\frac{1}{2}\frac{1}{\pi^2}\int_{[0,\pi]^{2}}\mathbb{E}[X(e^{-s}e^{i\theta})X(e^{-t}e^{i\theta'})]d\theta d\theta'\\
    &=\frac{1}{\pi}\int_{0}^{\pi}\left(\frac{1}{\pi}\int_{0}^{2\pi}\ln|e^{-s}e^{i\theta}-e^{-t}e^{i\theta'}|d\theta'\right)d\theta\\
    &=2\frac{1}{\pi}\int_{0}^{\pi}\ln|e^{-s}e^{i\theta}-0|d\theta\\
    &=2\min(s,t).
\end{split}
\end{equation*}
In particular, $B_0=0$ and $(B_t)_{t\geq 0}$ is indeed a standard Brownian motion.
\end{proof}
By conformal transformation, one obtains a similar decomposition for $\mathbb{D}$ at its boundary $\mathbb{T}=\partial\mathbb{D}$ with respect to the usual hyperbolic geodesics, but it is more convenient to work with $\mathbb{H}$ since the notations are easier.

\subsection{Semi-circle polar decomposition}\label{subse:semi_circle_polar_decomposition}
We now study the so-called radial decomposition for the semi-circle average procedure. This has the same effect as the radial decomposition of the circle-average regularization for the Gaussian free field, which is effective at singling out Brownian motions from the Gaussian free field. We stress that this is something quite specific to the two-dimensional log-correlated field that we are considering.

We now take out the Brownian motion part $B_t$ defined in the previous subsection from the log-correlated field $X$, and consider the following equality in law (in the sense of Schwartz distributions):
\begin{equation*}
    X(e^{-t}e^{i\theta})=\sqrt{2}B_t+Y^{\mathrm{H}}(t,\theta)
\end{equation*}
where $B_t$ is the standard Brownian above, and $Y^{\mathrm{H}}(t,\theta)$ is a generalized centered Gaussian field \emph{independent of $B$} with covariance
\begin{equation*}
    \mathbb{E}[Y^{\mathrm{H}}(t,\theta)Y^{\mathrm{H}}(t',\theta')]=\ln\frac{(e^{-t}\vee e^{-t'})^2}{|e^{-t}e^{i\theta}-e^{-t'}e^{i\theta'}||e^{-t}e^{i\theta}-e^{-t'}e^{-i\theta'}|}.
\end{equation*}
We point out at once that the process $Y^{\mathrm{H}}(t,\theta)$ is, in law, translation invariant in $t$: indeed the above covariance only depends on $|t-t'|$. The independence between $B$ and $Y^{\mathrm{H}}$ can be checked via a direct computation of their covariance. The field $Y^{\mathrm{H}}$ is still log-correlated, thus we can consider its associated bulk Gaussian multiplicative chaos measure
\begin{equation*}
    \exp\left(\gamma Y^{\mathrm{H}}(t,\theta)-\frac{\gamma^2}{2}\mathbb{E}[Y^{\mathrm{H}}(t,\theta)^2]\right)dtd\theta
\end{equation*}
and the associated random generalized function
\begin{equation*}
    Z^{\mathrm{H}}_t=\int_{0}^{\pi}(\sin\theta)^{-\frac{\gamma^2}{2}}\exp\left(\gamma Y^{\mathrm{H}}(t,\theta)-\frac{\gamma^2}{2}\mathbb{E}[Y^{\mathrm{H}}(t,\theta)^2]\right)d\theta.
\end{equation*}
This decomposition is similar when the change to polar coordinates is centered at $v\neq 0$ by translation, but we do not record the formulas here.

Similarly, we can decompose the boundary measure $\mu^{\partial}$ using the same recipe as above, by considering the associated formal (boundary) Gaussian multiplicative chaos measure $Z^{\partial}$:
\begin{equation*}
    Z^{\partial}_t=\exp\left(\frac{\gamma}{2} Y^{\mathrm{H}}(t,0)-\frac{\gamma^2}{8}\mathbb{E}[Y^{\mathrm{H}}(t,0)^2]\right)+\exp\left(\frac{\gamma}{2} Y^{\mathrm{H}}(t,\pi)-\frac{\gamma^2}{8}\mathbb{E}[Y^{\mathrm{H}}(t,\pi)^2]\right).
\end{equation*}

\begin{rema}
It should be noted that the quantities $Z^{\mathrm{H}}_s, Z^{\partial}_s$ defined above should be understood in the distributional sense, i.e. they only make sense when appearing in expressions of type
\begin{equation*}
    \int f(s)Z^{\bullet}_sds
\end{equation*}
for some smooth function $f$ under the usual $\epsilon$-regularization procedure recalled in Section~\ref{subse:gaussian_multiplicative_chaos}. Furthermore, since $Y^{\mathrm{H}},Y^{\partial}$ are correlated, $Z^{\mathrm{H}}$ and $Z^{\partial}$ are not independent, although they are independent of the Brownian motion part $B$ defined in Section~\ref{subse:decomposition_of_neumann_boundary_gaussian_free_field_at_the_boundary}. We will automatically treat these quantities in this way so that they are non-degenerate when $\gamma\in(0,2)$.
\end{rema}

\subsection{Williams decomposition}\label{subse:williams_decomposition}
We recall the following decomposition theorem on the drifted Brownian motion with respect to its maximum. The statement is taken from~\cite[Lemma~3.4]{Rhodes_2019}.
\begin{lemm}[Williams decomposition theorem for drifted Brownian motion]
Let $(B_s)_{s\geq 0}$ be a standard Brownian motion, and $(B_s-\alpha s)_{s\geq 0}$ with $\alpha>0$ be a negatively drifted Brownian motion. Define $\widetilde{M}=\sup_{s\geq 0}(B_s-\alpha s)$. Then conditionally on $\widetilde{M}$, the law of the path $(B_s-\alpha s)_{s\geq 0}$ is the concatenation of two independent paths:
\begin{itemize}
    \item A Brownian motion with opposite positive drift $(\overline{B}_s+\alpha s)_{s\leq\tau_{\widetilde{M}}}$, run until its hitting time $\tau_{\widetilde{M}}$ of $\widetilde{M}$;
    \item A negatively drifted Brownian motion starting from $\widetilde{M}$ conditioned to stay negative $(\widetilde{M}+\widetilde{B}_{s-\tau_{\widetilde{M}}}+\alpha (s-\tau_{\widetilde{M}}))_{s\geq\tau_{\widetilde{M}}}$.
\end{itemize}
One also has the following time reversal property for all $C>0$ and $\tau_C$ the hitting time of $C$: this property can be expressed as an equality in law
\begin{equation*}
    (\overline{B}_{\tau_C-s}+\alpha(\tau_C-s)-C)_{s\leq \tau_C}=(\widehat{B}_s-\alpha s)_{s\leq L_{-C}},
\end{equation*}
where $(\widehat{B}_s-\alpha s)_{s\geq 0}$ is a Brownian motion with drift $-\alpha$ conditioned to stay negative, and $L_{-C}$ is the last time $(\widehat{B}_s-\alpha s)$ hits $-C$.
\end{lemm}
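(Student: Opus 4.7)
The plan is to combine the strong Markov property at the time $\tau_{\widetilde{M}}$ with Doob's $h$-transform for the pre-maximum portion, and to identify the post-maximum portion with a conditioned drifted Brownian motion. First I would record the law of $\widetilde{M}$ itself: applying optional stopping to the exponential martingale $\exp(-2\alpha(B_s-\alpha s))$ at the hitting time $\tau_x$ of level $x>0$, and using that $B_s-\alpha s\to -\infty$ almost surely, one obtains $\mathbb{P}[\widetilde{M}\geq x]=e^{-2\alpha x}$, so $\widetilde{M}$ is exponential with rate $2\alpha$. In particular $\tau_{\widetilde{M}}<\infty$ a.s.\ and $\widetilde{M}>0$ a.s.

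Next I would decompose the path at $\tau_{\widetilde{M}}$. For the pre-maximum portion, fix $m>0$ and consider the event $\{\widetilde{M}=m\}$. Writing $X_s=B_s-\alpha s$, the harmonic function for $X$ killed at level $m$ is $h(x)=e^{2\alpha(x-m)}$ (the probability, starting from $x<m$, that $X$ ever reaches $m$). The law of $(X_s)_{s\leq \tau_m}$ conditioned on $\tau_m<\infty$ is, by Doob's $h$-transform, that of a diffusion with drift $-\alpha+\sigma^2\,h'/h=-\alpha+2\alpha=+\alpha$ and unit diffusion coefficient, i.e.\ a Brownian motion with \emph{positive} drift $+\alpha$ run until its first passage at $m$. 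Conditioning further on the precise value $\widetilde{M}=m$ (rather than just $\widetilde{M}\geq m$) does not change the law of the pre-maximum piece because at $\tau_m$ the process continues by the strong Markov property independently of the past. For the post-maximum portion, the strong Markov property applied at $\tau_{\widetilde{M}}$ gives that $(X_{\tau_{\widetilde{M}}+s}-\widetilde{M})_{s\geq 0}$ is again a Brownian motion with drift $-\alpha$, independent of $(X_u)_{u\leq \tau_{\widetilde{M}}}$; imposing that $\widetilde{M}$ is the global maximum is precisely the conditioning that this shifted process stays strictly negative for all $s>0$, which is well defined via the standard $h$-transform (using $\tilde h(x)=1-e^{2\alpha x}$ on $(-\infty,0)$). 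The independence of the two pieces given $\widetilde{M}$ is inherited from the strong Markov property.

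For the time-reversal identity, I would invoke the general time-reversal theorem for one-dimensional diffusions (Nagasawa): reversing a Markov process with generator $\mathcal{L}$ at a stopping time yields another Markov process whose generator can be read off from the invariant measure. Concretely, applied to Brownian motion with drift $+\alpha$ stopped at $\tau_C$, time reversal from the terminal point $C$ produces a Brownian motion with drift $-\alpha$ conditioned to remain below $C$, run until its last exit from a neighborhood of $0$; shifting spatially by $-C$ converts $\{X<C\}$ into $\{X<0\}$ and the terminal condition at $0$ into a last-passage time at $-C$. This yields the stated identity in law. Alternatively, one can verify this by computing finite-dimensional joint densities using the explicit transition kernel of the drifted Brownian motion killed at level $C$ and the formula $p_t^{\downarrow}(x,y)=\tilde h(y)p_t(x,y)/\tilde h(x)$ for the conditioned-to-stay-negative process, and checking that both sides have the same joint density with respect to the hitting/last-passage time reversal.

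The main obstacle is the pre-maximum identification, because one must justify conditioning on the event $\{\widetilde{M}=m\}$ (of probability zero) by disintegrating against the law of $\widetilde{M}$, and then check that the resulting regular conditional law is indeed the Doob $h$-transform described above; this is where the explicit exponential density $2\alpha e^{-2\alpha m}\,dm$ and the absolute continuity relations between the killed drifted Brownian motion and the opposite-drift Brownian motion must be carefully matched. Once this is in place, the remaining pieces, including the time-reversal identity, follow from classical computations on one-dimensional Brownian motion.
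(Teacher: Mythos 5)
The paper never proves this lemma: it is quoted verbatim from Rhodes--Vargas \cite[Lemma~3.4]{Rhodes_2019}, which itself rests on Williams' classical path decomposition, so there is no internal argument to compare against. Your sketch is the standard textbook proof and is essentially sound: exponential law of $\widetilde{M}$ by optional stopping, identification of the pre-maximum piece as the Doob $h$-transform with $h(x)=e^{2\alpha(x-m)}$ (hence drift $+\alpha$ run to its first passage at $m$), strong Markov property plus the $h$-transform with $\tilde h(x)=1-e^{2\alpha x}$ on $(-\infty,0)$ for the post-maximum piece, and Nagasawa duality (or a direct computation with the killed transition kernel) for the time-reversal identity; this buys a self-contained proof where the paper simply delegates to the literature. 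Two points to tighten. First, a sign slip: the exponential martingale for $X_s=B_s-\alpha s$ is $\exp\left(2\alpha X_s\right)$, not $\exp\left(-2\alpha X_s\right)$ (the latter does not have constant expectation); optional stopping at $\tau_x$ with the correct martingale gives $\mathbb{P}[\widetilde{M}\geq x]=e^{-2\alpha x}$ as you state, and this matches the identity $\mathbb{P}[e^{\widetilde{M}}>t]=t^{-2\alpha}$ used in Section~2.7 of the paper. Second, the disintegration over the zero-probability events $\{\widetilde{M}=m\}$, together with the conditional independence of the two pieces given $\widetilde{M}$, is precisely the content of Williams' theorem; you correctly flag it as the main obstacle, but in a complete write-up you would either carry out that regular-conditional-probability/limiting argument explicitly or, as the paper does, cite Williams (or Rhodes--Vargas) and stop there.
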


\subsection{Explicit calculations on the maximum of a drifted Brownian motion}\label{subse:explicit_calculations_on_the_maximum_of_a_drifted_brownian_motion}

Let $(B_s-\alpha s)_{s\geq 0}$ with $\alpha>0$ be a negatively drifted Brownian motion, and $\widetilde{M}=\sup_{s\geq 0}(B_s-\alpha s)$ its maximum defined in the previous section. We have the following explicit identity on the distribution of $\widetilde{M}$:
\begin{equation*}
    \forall t\geq 1,\quad \mathbb{P}[e^{\widetilde{M}}>t]=t^{-2\alpha}.
\end{equation*}
Notice that this identity depends on the drift $\alpha>0$, which will be chosen later.

\section{Strategy of proof and some heuristics}\label{sec:strategy_of_proof_and_some_heuristics}
We first provide some highlevel overviews on the strategy of proof, more precisely on the modified localization scheme that we refer to as the \emph{localization trick at the boundary}. We then supplement this strategy with some heuristical argument which yields the correct decaying speed $t^{-\frac{2}{\gamma^2}}$ for the right tail of the bulk measure as announced in Theorem~\ref{th:main_result}.

\subsection{Localization at the boundary}\label{subse:localization_at_the_boundary}
The key observation of this paper is that, although our goal is to establish the right tail estimate of the measure $\mu^{\mathrm{H}}(Q_r)$ over the cube $Q_r$, the contribution of the mass comes mostly from the part of the measure $\mu^{\mathrm{H}}$ near the boundary $I_r=[-r,r]\in\mathbb{R}$. This is alrealy observed in~\cite{huang2018liouville,Huang:2023aa}, with notably the following positive moment bound:
\begin{equation*}
    \mathbb{E}[\mu^{\mathrm{H}}(Q_r)^{p}]<\infty
\end{equation*}
if and only if $p<\frac{2}{\gamma^2}$. Recall that, contrary to the moment bound $\frac{4}{\gamma^2}$ of the classical two-dimensional Gaussian multiplicative chaos measure, this moment bound could go below $1$ with $\gamma\in(0,2)$.

Notice also the above moment bound seems to be coming from a one-dimensional scaling. This is indeed reflected in the proof of this result in~\cite{Huang:2023aa}, which suggests that one should not follow verbatim the original localization scheme of~\cite{Rhodes_2019}, since the bulk measure we consider is highly non-homogeneous in space and is concentrated around the boundary. Namely, comparing the two different moment bounds above, we know the contribution from the region with positive distance from the boundary should not be reflected in the right tail of the bulk measure, see also Remark~\ref{rema:only_boundary_covariance_matters}. Combining with considerations coming from the boundary Liouville conformal field theory, this suggests that we should introduce in parallel the boundary measure
\begin{equation*}
    \mu^{\partial}(I)=\lim_{\epsilon\to 0}\int_{I}e^{\frac{\gamma}{2}X_\epsilon(w)-\frac{\gamma^2}{8}\mathbb{E}[X_{\epsilon}(w)^2]}dw,\quad I\subset\mathbb{R},
\end{equation*}
consider the joint law of the bulk/boundary measures $(\mu,\mu^{\partial})$, and perform the localization trick only with respect to this boundary measure $\mu^{\partial}$ (instead of the bulk measure $\mu^{\mathrm{H}}$). This turns out to be the correct way of effectively pinning down the mass of $\mu^{\mathrm{H}}(Q_r)$ and to study its right tail.

More precisely, we apply the following localization scheme at the boundary, i.e. by tilting with respect to the above boundary Gaussian multiplicative chaos measure $\mu^{\partial}$:
\begin{equation*}
    \mathbb{P}[\mu(Q_r)>t]=\mathbb{E}\left[\frac{\mu^{\partial}(I_r)}{\mu^{\partial}(I_r)}\mathbf{1}_{\{\mu^{\mathrm{H}}(Q_r)>t\}}\right]=\int_{-r}^{r}\mathbb{E}\left[\frac{\mathbf{1}_{\{\mu^{\mathrm{H}}_{v}(Q_r)>t\}}}{\mu^{\partial}_{v}(I_r)}\right]dv,
\end{equation*}
where we used the Girsanov transformation with the tilting term $\mu^{\partial}(I_r)$, and the localized measures (at $v\in[-r,r]\subset\mathbb{R}$) are defined respectively as
\begin{equation}\label{eq:LocalizedBulkMeasure}
    \mu^{\mathrm{H}}_{v}(A)=\int_{A}|z-v|^{-\gamma^2}\mu^{\mathrm{H}}(d^2z),\quad A\subset Q_r,
\end{equation}
and
\begin{equation}\label{eq:LocalizedBoundaryMeasure}
    \mu^{\partial}_{v}(I)=\int_{I}|w-v|^{-\frac{\gamma^2}{2}}\mu^{\partial}(dw),\quad I\subset\mathbb{R}.
\end{equation}

\subsection{Decomposition of the Gaussian multiplicative chaos measure around the singularity}\label{subse:decomposition_of_the_gaussian_multiplicative_chaos_measure_around_the_singularity}

Using the Williams' decomposition for drifted Brownian motions (see Section~\ref{subse:williams_decomposition}), we can lift the above polar decomposition of the log-correlated Gaussian field $X$ to a decomposition of the localized bulk/boundary Gaussian multiplicative chaos measures~\eqref{eq:LocalizedBulkMeasure} and~\eqref{eq:LocalizedBoundaryMeasure} around the singularity $v\in\mathbb{R}$, which we take to be $0$ for simplicity.

Concretely, we perform the same decomposition as in Section~\ref{subse:semi_circle_polar_decomposition}, but restricted to the half-ball $B(0,\rho)$. This can be obtained simply by scaling, but we follow the notations of~\cite[Section~3.4]{Rhodes_2019} for the convenience of the reader. We introduce the average of $X$ on the semi-circle $S(0,\rho)$:
\begin{equation*}
    N_{\rho}=\frac{1}{\pi}\int_{0}^{\pi}X(\rho e^{i\theta})d\theta.
\end{equation*}
By a standard calculation, $\mathbb{E}[N_{\rho}^2]=-2\ln \rho$. Using the radial decomposition of Section~\ref{subse:semi_circle_polar_decomposition}, we have the following equalities in distribution:
\begin{equation}\label{eq:RadialRepresentations}
\begin{split}
    \mu^{\mathrm{H}}_0(Q(0,\rho))&=\rho^{2-\frac{\gamma^2}{2}}e^{\gamma N_{\rho}}e^{\gamma M}I^{\mathrm{H}}_\gamma(M),\\
    \mu^{\partial}_0(I(0,\rho))&=\rho^{1-\frac{\gamma^2}{4}}e^{\frac{\gamma}{2}N_{\rho}}e^{\frac{\gamma}{2}M}I^{\partial}_{\gamma}(M),
\end{split}
\end{equation}
where $B^{\gamma}_s$ denotes the two-sided drifted Brownian motion $\sqrt{2}B_s+(\frac{\gamma}{2}-\frac{2}{\gamma})s$ conditioned to be negative, and we set
\begin{equation*}
    I^{\mathrm{H}}_{\gamma}(M)=\int_{-L_{-M}}^{\infty}e^{\gamma B^{\gamma}_s}Z^{\mathrm{H}}_sds,\quad I^{\partial}_{\gamma}(M)=\int_{-L_{-M}}^{\infty}e^{\frac{\gamma}{2} B^{\gamma}_s}Z^{\partial}_sds.
\end{equation*}
In the sequel we sometimes drop the subscript/superscript $\gamma$ for readability, and write simply $I^{\mathrm{H}}(\star)$ or $I^{\partial}(\star)$ where $\star$ can be any real number using the above formulas.

\begin{rema}\label{rema:TrivialMonotonicity}
Notice that the both functions $x\mapsto I^{\mathrm{H}}(x)$ and $x\mapsto I^{\partial}(x)$ are increasing in $x$, by the positivity of the integrands. We also denote by
\begin{equation}\label{eq:Definitions_I}
    I^{\mathrm{H}}(\infty)=\int_{-\infty}^{\infty}e^{\gamma B^{\gamma}_s}Z^{\mathrm{H}}_sds,\quad I^{\partial}(\infty)=\int_{-\infty}^{\infty}e^{\frac{\gamma}{2}B^{\gamma}_s}Z^{\partial}_sds,
\end{equation}

These last two Gaussian multiplicative chaos measures are the ones appearing in the expression of the tail profile constant of Theorem~\ref{th:main_result}. Notice that their quotient appearing in Theorem~\ref{th:main_result} (as well as the expectation thereof)
\begin{equation*}
    \frac{I^{\mathrm{H}}(x)^{\frac{2}{\gamma^2}}}{I^{\partial}(x)}
\end{equation*}
has no reason to be monotonic in $x$.
\end{rema}

\subsection{Heuristics with drifted Brownian motions}\label{subse:heuristics_with_drifted_brownian_motions}
It turns out that the expression $\mathbb{E}\left[\frac{\mathbf{1}_{\{\mu^{\mathrm{H}}_{v}(Q_r)>t\}}}{\mu^{\partial}_{v}(I_r)}\right]$ is so-called ``local'' and does not differ too much for each $v\in[-r,r]$ (this point will be justified later in Section~\ref{sec:reduction_to_local_estimates}), we continue our illustration of the method with the special choice $v=0$ for simplicity.

We now present a heuristic which illustrates that the above localization trick at the boundary is the correct strategy, in the sense that it yields the correct decay rate $t^{-\frac{2}{\gamma^2}}$ in Theorem~\ref{th:main_result}.

Recall the radial decomposition of the log-correlated field around the point-wise singularity $v$ at the boundary of Section~\ref{subse:decomposition_of_the_gaussian_multiplicative_chaos_measure_around_the_singularity}. Now we forget about the ``lateral noise'' part denoted by $Y^{\bullet}$ (and therefore $Z^{\bullet}$), and only keep the drifted Brownian motion part $B^{\gamma}_s$ in the rest of this section.

More concretely, recall that we want to estimate
\begin{equation*}
    \mathbb{E}\left[\frac{\mathbf{1}_{\{\mu^{\mathrm{H}}_{0}(Q_r)>t\}}}{\mu^{\partial}_{0}(I_r)}\right].
\end{equation*}
Using the expressions~\eqref{eq:RadialRepresentations} but forgetting the lateral noises $Z^{\bullet}$, this is reduces to estimating
\begin{equation*}
    \mathbb{E}\left[\frac{\mathbf{1}_{\{r^{2-\frac{\gamma^2}{2}}e^{\gamma N_{r}}e^{\gamma M}\int_{0}^{\infty}e^{\gamma B_s^{\gamma}}ds>t\}}}{r^{1-\frac{\gamma^2}{4}}e^{\frac{\gamma}{2}N_r}e^{\frac{\gamma}{2}M}\int_{0}^{\infty}e^{\frac{\gamma}{2} B_s^{\gamma}}ds}\right]
\end{equation*}
where $B^{\gamma}_s$ has the same law as $\sqrt{2}B_s+(\frac{\gamma}{2}-\frac{2}{\gamma})s$ conditioned to stay below $0$, and $M$ is the maximum of the corresponding two-side drifted Brownian motion.

Now we use the independence of $M$ via Williams decomposition recalled in Section~\ref{subse:williams_decomposition} and write the above as
\begin{equation*}
    \mathbb{E}\left[\frac{\mathbf{1}_{\{e^{\gamma M}C^{\mathrm{H}}>t\}}}{e^{\frac{\gamma}{2}M}C^{\partial}}\right],
\end{equation*}
where $C^{\mathrm{H}}$ and $C^{\partial}$ are random constants independent of $M$. Now we can use the explicit distribution of the maximum $M$ recall in Section~\ref{subse:explicit_calculations_on_the_maximum_of_a_drifted_brownian_motion} to get the equality\footnote{Recall that if a positive random variable $X$ satisfies $\mathbb{P}[X>t]=t^{-\alpha}$, then $\mathbb{E}[X^{-1}\mathbf{1}_{\{X>t\}}]=\frac{\alpha}{\alpha+1}t^{-(\alpha+1)}$.}
\begin{equation*}
    \mathbb{E}\left[\frac{\mathbf{1}_{\{e^{\gamma M}C^{\mathrm{H}}>t\}}}{e^{\frac{\gamma}{2}M}C^{\partial}}\right]=(1-\frac{\gamma^2}{4})\mathbb{E}\left[\frac{(C^{\mathrm{H}})^{\frac{2}{\gamma^2}}}{C^{\partial}}\right]t^{-\frac{2}{\gamma^2}}.
\end{equation*}
The importance point here is that the power in $t$ in the above display is exactly as claimed in Theorem~\ref{th:main_result}. Putting back the expressions of $C^{\mathrm{H}}$ and $C^{\partial}$, we see that the right hand side above looks similar to the constant in Theorem~\ref{th:main_result} modulo the lateral noise parts.

The rest of this article is devoted to incorporate the lateral noise part in the above heuristics. This requires some careful control of the bulk/boundary quotients, since the noise parts in the bulk (e.g. $Z^{\mathrm{H}}$) and on the boundary (e.g. $Z^{\partial}$) interact in a highly non-trivial way. Indeed, as we have announced in Theorem~\ref{th:main_result}, the reflection coefficient is expressed in terms of a variant of the bulk/boundary quotient, which existence will only be established later in Lemma~\ref{lemm:finiteness_of_the_tail_profile_constant} using results from~\cite{Huang:2025aa}.

\section{Reduction to local estimates}\label{sec:reduction_to_local_estimates}
In this section, we reduce some key estimates to a local study near the singularity created by the localization at the boundary procedure. Recall that in Section~\ref{sec:strategy_of_proof_and_some_heuristics}, we showed that
\begin{equation}\label{eq:localized_integral}
    \mathbb{P}[\mu^{\mathrm{H}}(Q_r)>t]=\int_{v=-r}^{r}\mathbb{E}\left[\frac{\mathbf{1}_{\{\mu^{\mathrm{H}}_v(Q_r)>t\}}}{\mu^{\partial}_v(I_r)}\right]dv
\end{equation}
using the localization trick at the boundary, where $\mu^{\mathrm{H}}_v$, $\mu^{\partial}_v$ are respectively the bulk and boundary Gaussian multiplicative chaos measure localized at $v\in[-r,r]$ defined in~\eqref{eq:LocalizedBulkMeasure} and~\eqref{eq:LocalizedBoundaryMeasure}.

The goal of this section is to reduce the study of the integrand in the above display to a local estimate near the inserted point $v\in\mathbb{R}$ by establishing the following lemma:
\begin{lemm}[Getting rid of the non-singularity]\label{lemm:getting_rid_of_the_non-singularity}
Consider
\begin{equation*}
    Q(v,\rho)=B(v,\rho)\cap\overline{\mathbb{H}},\quad I(v,\rho)=[v-\rho,v+\rho]\subset\mathbb{R}
\end{equation*}
respectively the half ball in the upper half-plane centered at $v$ and the real interval centered at $v$, both of radius $\rho>0$. For each $v\in [-r,r]$, we can replace
\begin{equation*}
    \mathbb{E}\left[\frac{\mathbf{1}_{\{\mu^{\mathrm{H}}_v(Q_r)>t\}}}{\mu^{\partial}_v(I_r)}\right]
\end{equation*}
by a local study near the singularity point $v$ of the form
\begin{equation*}
    \mathbb{E}\left[\frac{\mathbf{1}_{\{\mu^{\mathrm{H}}_v(Q(v,\rho))>t\}}}{\mu^{\partial}_v(I(v,\rho))}\right]
\end{equation*}
within an error term of order $\rho^{-\kappa}o(t^{-\frac{2}{\gamma^2}-\delta})$ for some fixed $\delta>0$ and $\kappa\in(0,1)$ as long as $2\rho<\min(r-v,v+r)$ so that $Q(v,2\rho)\subset Q_r$ and $I(v,2\rho)\subset I_r$.
\end{lemm}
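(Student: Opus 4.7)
The plan is to bound the difference
$$\Delta := \mathbb{E}\!\left[\frac{\mathbf{1}_{\{\mu^{\mathrm{H}}_v(Q_r) > t\}}}{\mu^{\partial}_v(I_r)}\right] - \mathbb{E}\!\left[\frac{\mathbf{1}_{\{\mu^{\mathrm{H}}_v(Q(v,\rho)) > t\}}}{\mu^{\partial}_v(I(v,\rho))}\right]$$
by a two-step replacement argument, treating separately the bulk event in the numerator and the boundary measure in the denominator. A direct algebraic manipulation using $Q(v,\rho)\subset Q_r$ and $I(v,\rho)\subset I_r$ yields $\Delta = \mathrm{I} - \mathrm{II}$ with
$$\mathrm{I} := \mathbb{E}\!\left[\frac{\mathbf{1}_{\{\mu^{\mathrm{H}}_v(Q(v,\rho)) \leq t < \mu^{\mathrm{H}}_v(Q_r)\}}}{\mu^{\partial}_v(I_r)}\right], \qquad \mathrm{II} := \mathbb{E}\!\left[\frac{\mathbf{1}_{\{\mu^{\mathrm{H}}_v(Q(v,\rho))>t\}}\, \mu^{\partial}_v(I_r\setminus I(v,\rho))}{\mu^{\partial}_v(I_r)\, \mu^{\partial}_v(I(v,\rho))}\right],$$
with both quantities non-negative, so the goal reduces to showing that each of $\mathrm{I}$ and $\mathrm{II}$ is dominated by $\rho^{-\kappa}o(t^{-2/\gamma^2-\delta})$.

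For $\mathrm{I}$, the plan is to introduce an auxiliary threshold $s=s(t)>0$ and cover the indicator event by either $\{\mu^{\mathrm{H}}_v(Q_r\setminus Q(v,\rho))>s\}$ (atypically large outside mass) or $\{\mu^{\mathrm{H}}_v(Q(v,\rho))\in(t-s,t]\}$ (narrow window for the inside mass). For the ``outside large'' piece, I would first use the trivial inequality $\mu^{\partial}_v(I_r)\geq \mu^{\partial}_v(I(v,\rho))$ and then apply Markov's inequality at exponent $p>2/\gamma^2$ to obtain
$$\mathbb{E}\!\left[\frac{\mathbf{1}_{\{\mu^{\mathrm{H}}_v(Q_r\setminus Q(v,\rho))>s\}}}{\mu^{\partial}_v(I_r)}\right] \leq s^{-p}\, \mathbb{E}\!\left[\frac{\mu^{\mathrm{H}}_v(Q_r\setminus Q(v,\rho))^p}{\mu^{\partial}_v(I(v,\rho))}\right],$$
the finiteness of the right-hand weighted joint moment (along with its precise $\rho$-scaling) being the content of~\cite[Theorem~1]{Huang:2025aa}. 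For the ``narrow window'' piece, I would invoke the radial/Williams decomposition of Section~\ref{subse:decomposition_of_the_gaussian_multiplicative_chaos_measure_around_the_singularity}, which turns the window event into a narrow event on the maximum $M$ of the drifted Brownian motion; integrating against its explicit density from Section~\ref{subse:explicit_calculations_on_the_maximum_of_a_drifted_brownian_motion} and factoring out a joint moment of the lateral-noise integrals $(I^{\mathrm{H}},I^{\partial})$ (again finite by~\cite{Huang:2025aa}) yields a bound of order $s\cdot t^{-2/\gamma^2-1}$. Optimizing $s$ to balance the two pieces — roughly $s\sim t^{(2/\gamma^2+1)/(p+1)}$ — then gives $\mathrm{I}\leq C\,\rho^{-\kappa}\, t^{-2/\gamma^2-\delta}$ with $\delta=(p-2/\gamma^2)/(p+1)>0$.

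The term $\mathrm{II}$ is handled in a parallel fashion: using once more $\mu^{\partial}_v(I_r)\geq \mu^{\partial}_v(I(v,\rho))$ in the denominator and applying Markov's inequality at exponent $p>2/\gamma^2$ to the indicator $\mathbf{1}_{\{\mu^{\mathrm{H}}_v(Q(v,\rho))>t\}}$ reduces the problem to the finiteness of the joint moment
$$\mathbb{E}\!\left[\frac{\mu^{\mathrm{H}}_v(Q(v,\rho))^p\, \mu^{\partial}_v(I_r\setminus I(v,\rho))}{\mu^{\partial}_v(I(v,\rho))^2}\right],$$
with the $\rho^{-\kappa}$ dependence absorbed in the analogous joint moment estimate from~\cite{Huang:2025aa}.

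The main obstacle — and indeed the reason the modified localization scheme must be designed around the \emph{joint} law of $(\mu^{\mathrm{H}},\mu^{\partial})$ — is that the bulk measure $\mu^{\mathrm{H}}_v$ individually admits only positive moments strictly below $2/\gamma^2$, which would be insufficient in the Markov steps above to extract any genuine decay margin $\delta>0$ beyond the expected tail order $t^{-2/\gamma^2}$. The crucial input, proven in the companion paper~\cite{Huang:2025aa}, is that dividing by an appropriate positive power of the localized boundary measure $\mu^{\partial}_v$ strictly improves the admissible exponent past $2/\gamma^2$. Once this joint bulk/boundary moment improvement is in hand, the above manipulations become largely straightforward and both $\mathrm{I}$ and $\mathrm{II}$ fall within the required error bound, completing the reduction to a local study near the singularity.
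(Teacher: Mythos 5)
Your algebraic split $\Delta=\mathrm{I}-\mathrm{II}$ is fine, but the treatment of $\mathrm{I}$ contains a genuine gap at exactly the point where the boundary denominator is supposed to do its work. For the ``outside large'' piece you lower bound $\mu^{\partial}_v(I_r)$ by $\mu^{\partial}_v(I(v,\rho))$ and then need
\begin{equation*}
    \mathbb{E}\left[\frac{\mu^{\mathrm{H}}_v(Q_r\setminus Q(v,\rho))^{p}}{\mu^{\partial}_v(I(v,\rho))}\right]<\infty \quad\text{for some } p>\tfrac{2}{\gamma^2},
\end{equation*}
and you attribute this to~\cite{Huang:2025aa}. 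But this is a \emph{mismatched} quotient: the bulk mass lives over the boundary segment $I_r\setminus I(v,\rho)$, while the boundary measure in the denominator lives on $I(v,\rho)$. The moment-improvement mechanism of the companion paper (boundary mass scales like the square root of the bulk mass) only operates when the boundary measure sits beneath the boundary-singular region carrying the bulk mass, so that a large bulk excursion forces the denominator to be large as well. Here a large value of $\mu^{\mathrm{H}}_v(Q_r\setminus Q(v,\rho))$ is typically produced by a field excursion near the far boundary segment (e.g.\ near $\pm r$), on which event $\mu^{\partial}_v(I(v,\rho))$ remains of order one; since $\mathbb{E}[\mu^{\mathrm{H}}_v(Q_r\setminus Q(v,\rho))^{p}]=\infty$ for $p\geq\frac{2}{\gamma^2}$ (the complement still carries the uniform $\mathrm{Im}(z)^{-\gamma^2/2}$ singularity along $I_r\setminus I(v,\rho)$), the mismatched quotient is infinite precisely in the range $p>\frac{2}{\gamma^2}$ your optimization in $s$ requires ($\delta=(p-2/\gamma^2)/(p+1)>0$ forces $p>\frac{2}{\gamma^2}$). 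The paper keeps $\mu^{\partial}_v(I_r)$ in the denominator (equivalently, lower bounds it by $\mu^{\partial}_v(I(v,\rho)^{c})$, the boundary piece \emph{matched} to the complement bulk), for which the appendix lemma and Remark~\ref{rema:FinalLemma} give a bound $C\rho^{-\kappa}$ for all $p<\min(\frac{2}{\gamma^2}+\frac12,\frac{4}{\gamma^2})$, and this is what lets one choose $p(1-\eta)>\frac{2}{\gamma^2}$.

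Two further steps are weaker than you suggest. First, your ``narrow window'' bound of order $s\,t^{-2/\gamma^2-1}$ is obtained by ``factoring out'' the lateral-noise integrals from the law of $M$, but $I^{\mathrm{H}}(M)$ and $I^{\partial}(M)$ are \emph{not} independent of $M$; decoupling them is precisely the delicate content of Section~\ref{sec:expression_of_the_universal_constant}, and the paper's proof of this lemma deliberately avoids any window/density estimate by simply replacing $t$ with $t-t^{1-\eta}$ in the local term and noting that the Section~\ref{sec:expression_of_the_universal_constant} asymptotics are insensitive to this shift. Second, for $\mathrm{II}$ your single Markov step requires a three-region joint moment $\mathbb{E}\bigl[\mu^{\mathrm{H}}_v(Q(v,\rho))^{p}\,\mu^{\partial}_v(I_r\setminus I(v,\rho))\,\mu^{\partial}_v(I(v,\rho))^{-2}\bigr]$ which is not provided by~\cite{Huang:2025aa}; a naive H\"older splitting of the extra factor works only for $\gamma^2<2$ because $\mu^{\partial}_v(I_r\setminus I(v,\rho))$ has moments only up to $\frac{4}{\gamma^2}$. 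The paper instead truncates on the event $\{\mu^{\partial}_v(I(v,\rho)^{c})<t^{\frac12-\eta'}\}$, bounds its complement by Markov with exponent close to $\frac{4}{\gamma^2}$ (giving decay $t^{-(4/\gamma^2+1)^{-}(\frac12-\eta')}$, which beats $t^{-2/\gamma^2-\delta}$), and only needs the matched quotient with $q=2$ at $p=\frac{2}{\gamma^2}$; this works for all $\gamma\in(0,2)$. So the overall two-term structure is salvageable, but the key joint-moment inputs must be rearranged so that bulk and boundary regions are matched, as in the paper.
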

Under the conclusions of this lemma, we can replace each integrand $\mathbb{E}\left[\frac{\mathbf{1}_{\{\mu^{\mathrm{H}}_v(Q_r)>t\}}}{\mu^{\partial}_v(I_r)}\right]$ by its local version
\begin{equation*}
    \mathbb{E}\left[\frac{\mathbf{1}_{\{\mu^{\mathrm{H}}_v(Q(v,\rho))>t\}}}{\mu^{\partial}_v(I(v,\rho))}\right]
\end{equation*}
with negligeable error which is integrable over $v\in[-r,r]$ since $\kappa\in(0,1)$ and is $o(t^{-\frac{2}{\gamma^2}-\delta})$ with some $\delta>0$, and thus confirming our heuristics that ``the integrand is roughly the same for each $v\in[-r,r]$'' in a rigorous way. Indeed, we can further suppose that $v=0$ in the above display by translation invariance: estimates in this case will then be written down carefully in Section~\ref{sec:expression_of_the_universal_constant}.

\subsection{Getting rid of the non-singularity: overview of the strategy}\label{subse:getting_rid_of_the_non-singularity_overview_of_the_strategy}
The procedure of throwing out Gaussian multiplicative chaos masses outside of $B(v,\rho)$ is called in both~\cite[Section~3.5]{Rhodes_2019} and~\cite[Lemma~12]{wong2020universal} as ``getting rid of the non-singularity''. The situation is slightly more complicated in the setting of this paper because of the boundary term (in the sense that we should carefully discard boundary measures outside of $I(v,\rho)$ simultaneously with the bulk measure), as the bulk/boundary quotient is no longer a measure and does not enjoy additivity or monotonicity properties anymore. We roughly follow the notations of~\cite{Rhodes_2019} for the rest of this section.

Assume $2\rho<\min(r-v,v+r)$ so that $Q(v,2\rho)\subset Q_r$ and $I(v,2\rho)\subset I_r$ in the rest of this section. We denote by
\begin{equation*}
    Q(v,\rho)^{c}=Q_r\setminus Q(v,\rho),\quad I(v,\rho)^{c}=I_r\setminus I(v,\rho)
\end{equation*}
the complements of $Q(v,\rho)$ and $I(v,\rho)$ of the original bulk/boundary regions, respectively. We now establish two bounds, that both
\begin{equation*}
    \mathbb{E}\left[\frac{\mathbf{1}_{\{\mu^{\mathrm{H}}_v(Q_r)>t\}}}{\mu^{\partial}_v(I_r)}\right]-\mathbb{E}\left[\frac{\mathbf{1}_{\{\mu^{\mathrm{H}}_v(Q(v,\rho))>t\}}}{\mu^{\partial}_0(I(v,\rho))}\right]
\end{equation*}
and
\begin{equation*}
    \mathbb{E}\left[\frac{\mathbf{1}_{\{\mu^{\mathrm{H}}_v(Q(v,\rho))>t\}}}{\mu^{\partial}_v(I(v,\rho))}\right]-\mathbb{E}\left[\frac{\mathbf{1}_{\{\mu^{\mathrm{H}}_v(Q_r)>t\}}}{\mu^{\partial}_v(I_r)}\right]
\end{equation*}
are of order $\rho^{-\kappa}o(t^{-\frac{2}{\gamma^2}-\delta})$ with some $\kappa\in(0,1)$ and $\delta>0$. Before we get into the details of the proof, let us explain some guiding heuristics for this approximation.

The heuristic picture is the following. The indicator event $\mathbf{1}_{\{\mu^{\mathrm{H}}_v(Q_r)>t\}}$ is a large deviation type condition, and one can think of this as saying that $\mu^{\mathrm{H}}_v(Q_r)$ is of order $t$. Since this measure is localized at $0$, it received a strong singularity at $0$, and most of its mass comes from $\mu^{\mathrm{H}}_v(Q(v,\rho))$. In this way, the contribution from the complement, $\mu^{\mathrm{H}}_v(Q(v,\rho)^{c})$ should be of order $o(t)$. When this is true, the Brownian motion decomposition heuristics from Section~\ref{subse:heuristics_with_drifted_brownian_motions} shows that the corresponding boundary mass $\mu^{\partial}_v(I(v,\rho))$ should be of order $t^{\frac{1}{2}}$. Correspondingly, the complementary boundary mass $\mu^{\partial}_v(I(v,\rho)^{c})$ should be much smaller, of order $o(t^{\frac{1}{2}})$. Therefore, we should use cut-offs at scale $t$ for the bulk measure part in the nominator, and at scale $t^{\frac{1}{2}}$ for the boundary measure part in the denominator. We will see by explicit calculations that these are effectively the correct cut-off scales, and we will use moment estimates below to justify these intuitions.

\subsection{An upper bound}\label{subse:an_upper_bound}
We first upper bound $\mathbb{E}\left[\frac{\mathbf{1}_{\{\mu^{\mathrm{H}}_v(Q_r)>t\}}}{\mu^{\partial}_v(I_r)}\right]$ by $\mathbb{E}\left[\frac{\mathbf{1}_{\{\mu^{\mathrm{H}}_v(Q(v,\rho))>t\}}}{\mu^{\partial}_v(I(v,\rho))}\right]$ up to some correction term of order $o(t^{-\frac{2}{\gamma^2}})$ and appropriate dependence in $\rho$. To this end, let $\eta\in(0,1)$ be some cut-off parameter that we choose later, and write
\begin{equation*}
\begin{split}
    \mathbb{E}\left[\frac{\mathbf{1}_{\{\mu^{\mathrm{H}}_v(Q_r)>t\}}}{\mu^{\partial}_v(I_r)}\right]&=\mathbb{E}\left[\frac{\mathbf{1}_{\{\mu^{\mathrm{H}}_v(Q(v,\rho))+\mu^{\mathrm{H}}_v(Q(v,\rho)^{c})>t\}}}{\mu^{\partial}_v(I(v,\rho))+\mu^{\partial}_v(I(v,\rho)^{c})}\right]\\
    &\leq\mathbb{E}\left[\frac{\mathbf{1}_{\{\mu^{\mathrm{H}}_v(Q(v,\rho))>t-t^{1-\eta}\}}+\mathbf{1}_{\{\mu^{\mathrm{H}}_v(Q(v,\rho)^{c})>t^{1-\eta}\}}}{\mu^{\partial}_v(I(v,\rho))+\mu^{\partial}_v(I(v,\rho)^{c})}\right]\\
    &\leq\mathbb{E}\left[\frac{\mathbf{1}_{\{\mu^{\mathrm{H}}_v(Q(v,\rho))>t-t^{1-\eta}\}}}{\mu^{\partial}_v(I(v,\rho))}\right]+\mathbb{E}\left[\frac{\mathbf{1}_{\{\mu^{\mathrm{H}}_v(Q(v,\rho)^{c})>t^{1-\eta}\}}}{\mu^{\partial}_v(I_r)}\right].
\end{split}
\end{equation*}
We now indicate how we treat the two terms above:
\begin{enumerate}
    \item The first term will be studied in Section~\ref{sec:expression_of_the_universal_constant}, for which we will establish an estimate of the form with some $\kappa\in(0,1)$ and $\delta>0$:
    \begin{equation*}
        \mathbb{E}\left[\frac{\mathbf{1}_{\{\mu^{\mathrm{H}}_v(Q(v,\rho))>t\}}}{\mu^{\partial}_v(I(v,\rho))}\right]=Ct^{-\frac{2}{\gamma^2}}+\rho^{-\kappa}o(t^{-\frac{2}{\gamma^2}-\delta}).
    \end{equation*}
    Changing $t$ to $t-t^{1-\eta}$ on the left hand side with $\eta\in(0,1)$ will not affect this estimate (in particular, with the same constant $C$ for the right tail coefficient).
    \item The second term is $o(t^{-\frac{2}{\gamma^2}-\delta})$ since for any $p>0$,
\begin{equation*}
    \mathbb{E}\left[\frac{\mathbf{1}_{\{\mu^{\mathrm{H}}_v(Q(v,\rho)^{c})>t^{1-\eta}\}}}{\mu^{\partial}_v(I_r)}\right]\leq t^{-p(1-\eta)}\mathbb{E}\left[\frac{\mu^{\mathrm{H}}_v(Q(v,\rho)^{c})^{p}}{\mu^{\partial}_v(I_r)}\right],
\end{equation*}
and the last expectation term is smaller than $C\rho^{-\kappa}$ with some $\kappa\in(0,1)$ for $p$ close enough to $\min(\frac{2}{\gamma^2}+\frac{1}{2},\frac{4}{\gamma^2})$ from below, see Remark~\ref{rema:FinalLemma}. In particular, we can choose $p>\frac{2}{\gamma^2}$ and $p(1-\eta)>\frac{2}{\gamma^2}$ such that the second term is also $\rho^{-\kappa}o(t^{-\frac{2}{\gamma^2}-\delta})$ for some $\kappa\in(0,1)$ and $\delta>0$.
\end{enumerate}
Combining these observations, we get the desired upper bound estimate up to an error of order $\rho^{-\kappa}o(t^{-\frac{2}{\gamma^2}-\delta})$ for some small but positive constant $\delta>0$ and some $\kappa\in(0,1)$. The condition $\kappa\in(0,1)$ ensures that~\eqref{eq:localized_integral} is integrable in the $v$ variable, thus we get the upper bound
\begin{equation*}
    \mathbb{P}[\mu^{\mathrm{H}}(Q_r)>t]=\int_{v=-r}^{r}\mathbb{E}\left[\frac{\mathbf{1}_{\{\mu^{\mathrm{H}}_v(Q_r)>t\}}}{\mu^{\partial}_v(I_r)}\right]dv\leq 2r\cdot \mathbb{E}\left[\frac{\mathbf{1}_{\{\mu^{\mathrm{H}}_0(Q_r)>t\}}}{\mu^{\partial}_0(I_r)}\right]+o(t^{-\frac{2}{\gamma^2}-\delta}).
\end{equation*}

\subsection{A lower bound}\label{subse:a_lower_bound}
We now show the converse to the previous section, that we lower bound $\mathbb{E}\left[\frac{\mathbf{1}_{\{\mu^{\mathrm{H}}_v(Q_r)>t\}}}{\mu^{\partial}_v(I_r)}\right]$ by $\mathbb{E}\left[\frac{\mathbf{1}_{\{\mu^{\mathrm{H}}_v(Q(v,\rho))>t\}}}{\mu^{\partial}_v(I(v,\rho))}\right]$ up to some correction term of order $\rho^{-\kappa}o(t^{-\frac{2}{\gamma^2}-\delta})$ for some $\kappa\in(0,1)$ and $\delta>0$. The condition $\kappa\in(0,1)$ insures the integrability of the $v$ variable in~\eqref{eq:localized_integral} as discussed in the previous paragraph.

To this end, let $\eta'\in(0,1)$ be some cut-off parameter that we choose later, and write
\begin{equation*}
\begin{split}
    \mathbb{E}\left[\frac{\mathbf{1}_{\{\mu^{\mathrm{H}}_v(Q_r)>t\}}}{\mu^{\partial}_v(I_r)}\right]&\geq\mathbb{E}\left[\frac{\mathbf{1}_{\{\mu^{\mathrm{H}}_v(Q(v,\rho))>t\}}}{\mu^{\partial}_v(I_r)}\right]\\
    &\geq\mathbb{E}\left[\frac{\mathbf{1}_{\{\mu^{\mathrm{H}}_v(Q(v,\rho))>t\}}\mathbf{1}_{\{\mu^{\partial}_v(I(v,\rho)^{c})<t^{\frac{1}{2}-\eta'}\}}}{\mu^{\partial}_v(I_r)}\right]\\
    &\geq\mathbb{E}\left[\frac{\mathbf{1}_{\{\mu^{\mathrm{H}}_v(Q(v,\rho))>t\}}\mathbf{1}_{\{\mu^{\partial}_v(I(v,\rho)^{c})<t^{\frac{1}{2}-\eta'}\}}}{\mu^{\partial}_v(I(v,\rho))+t^{\frac{1}{2}-\eta'}}\right]\\
    &\geq \mathbb{E}\left[\frac{\mathbf{1}_{\{\mu^{\mathrm{H}}_v(Q(v,\rho))>t\}}}{\mu^{\partial}_v(I(v,\rho))+t^{\frac{1}{2}-\eta'}}\right]-\mathbb{E}\left[\frac{\mathbf{1}_{\{\mu^{\mathrm{H}}_v(Q(v,\rho))>t\}}\mathbf{1}_{\{\mu^{\partial}_v(I(v,\rho)^{c})>t^{\frac{1}{2}-\eta'}\}}}{\mu^{\partial}_v(I(v,\rho))+t^{\frac{1}{2}-\eta'}}\right]\\
    &\geq \mathbb{E}\left[\frac{\mathbf{1}_{\{\mu^{\mathrm{H}}_v(Q(v,\rho))>t\}}}{\mu^{\partial}_v(I(v,\rho))}\right]-t^{-(\frac{1}{2}-\eta')}\mathbb{E}\left[\frac{\mathbf{1}_{\{\mu^{\mathrm{H}}_v(Q(v,\rho))>t\}}}{\mu^{\partial}_v(I(v,\rho))^2}\right]-t^{-(\frac{1}{2}-\eta')}\mathbb{P}\left[\mu^{\partial}_v(I(v,\rho)^{c})>t^{\frac{1}{2}-\eta'}\right]
\end{split}
\end{equation*}
where the elementary inequality $(1+u)^{-1}\geq 1-u$ for $u>0$ is used at the last inequality. The last two terms are the error terms:
\begin{enumerate}
    \item For the second term, notice that for any $p>0$,
    \begin{equation*}
        t^{-(\frac{1}{2}-\eta')}\mathbb{E}\left[\frac{\mathbf{1}_{\{\mu^{\mathrm{H}}_v(Q(v,\rho))>t\}}}{\mu^{\partial}_v(I(v,\rho))^2}\right]\leq t^{-(p+\frac{1}{2}-\eta')}\mathbb{E}\left[\frac{\mu^{\mathrm{H}}_v(Q(v,\rho))^{p}}{\mu^{\partial}_v(I(v,\rho))^2}\right].
    \end{equation*}
    Remark~\ref{rema:FinalLemma} shows that the last expectation term is smaller than $C\rho^{-\kappa}$ with some $\kappa\in(0,1)$ for $p=\frac{2}{\gamma^2}$ and any $\eta'\in(0,\frac{1}{2})$.
    \item The third term is also $\rho^{-\kappa}o(t^{-\frac{2}{\gamma^2}-\delta})$ for small enough $\eta'$, since for any $p>0$,
    \begin{equation*}
        t^{-(\frac{1}{2}-\eta')}\mathbb{P}\left[\mu^{\partial}_v(I(v,\rho)^{c})>t^{\frac{1}{2}-\eta'}\right]\leq t^{-(p+1)(\frac{1}{2}-\eta')}\mathbb{E}[\mu^{\partial}_v(I(v,\rho)^{c})^{p}].
    \end{equation*}
    Now that we have taken out the singularity $I(0,\rho)$, the mass $\mu^{\partial}_0(I(0,\rho)^{c})$ is comparable to that of a classical one-dimensional Gaussian multiplicative chaos on the boundary, modulo some multiplicative constant that only depends on $\rho$. Recall that the moment bound for the auxiliary boundary Gaussian multiplicative chaos measure $\mu^{\partial}$ (which is defined with $-2\ln|z-w|$ covariance but $\frac{\gamma}{2}$ coupling constant) is $p<\frac{4}{\gamma^2}$. Especially, taking $p$ close to this threshold from below,~\cite[Lemma~3.6]{Rhodes_2019} yields that the third term is of order
    \begin{equation*}
        C_{p}\rho^{-\kappa}t^{-(\frac{4}{\gamma^2}+1)^{-}(\frac{1}{2}-\eta')}=\rho^{-\kappa}o(t^{-\frac{2}{\gamma^2}-\delta})
    \end{equation*}
    with some $\kappa\in(0,1)$ and small enough $\eta'\in(0,\frac{1}{2})$.
\end{enumerate}
Combining these observations, we get the desired lower bound estimate
\begin{equation*}
    \mathbb{P}[\mu^{\mathrm{H}}(Q_r)>t]=\int_{v=-r}^{r}\mathbb{E}\left[\frac{\mathbf{1}_{\{\mu^{\mathrm{H}}_v(Q_r)>t\}}}{\mu^{\partial}_v(I_r)}\right]dv\geq 2r\cdot \mathbb{E}\left[\frac{\mathbf{1}_{\{\mu^{\mathrm{H}}_0(Q_r)>t\}}}{\mu^{\partial}_0(I_r)}\right]+o(t^{-\frac{2}{\gamma^2}-\delta}).
\end{equation*}

\section{Expression of the universal tail profile constant}\label{sec:expression_of_the_universal_constant}
We now proceed to the proof of our main Theorem~\ref{th:main_result}, and derive the Gaussian multiplicative chaos expression for the universal tail constant. Our argument is based on~\cite{Rhodes_2019}, although we need the non-trivial preliminary estimates in the companion paper~\cite{Huang:2025aa} to adapt their arguments to the boundary case. More precisely, Lemma~\ref{lemm:finiteness_of_the_tail_profile_constant} below, which can be seen as a variant of~\cite[Theorem~2]{Huang:2025aa}, is the bulk/boundary quotient analog of~\cite[Lemma~3.3]{Kupiainen_2020} (or~\cite[Equation~(1.11)]{Rhodes_2019}), and the lower/upper bound estimates are done similarly to~\cite{Rhodes_2019} provided this preliminary lemma. Suitable technical changes in the calculations due to the higher-dimensional uniform boundary singularity will be explained in a streamlined fashion.

According to Lemma~\ref{lemm:getting_rid_of_the_non-singularity} of Section~\ref{sec:reduction_to_local_estimates}, it remains to study the behavior near the singularity, i.e.
\begin{equation*}
    \mathbb{E}\left[\frac{\mathbf{1}_{\{\mu^{\mathrm{H}}_v(Q(v,\rho))>t\}}}{\mu^{\partial}_v(I(v,\rho))}\right].
\end{equation*}
By translation invariance of the covariance kernel~\eqref{eq:ExactScalingNeumannKernel}, we will take $v=0$ until the end of this section.

\subsection{Some preparations}\label{subse:some_preparations}
Recall the modified Gaussian multiplicative chaos masses $I^{\mathrm{H}}(\infty)$ and $I^{\partial}(\infty)$ defined in Remark~\ref{rema:TrivialMonotonicity} and which are used to define the tail profile constant $C_{(1,0)}$ in Theorem~\ref{th:main_result}. In this section, we establish a moment bound of type
\begin{equation*}
    \mathbb{E}\left[\frac{I^{\mathrm{H}}(\infty)^{\frac{2}{\gamma^2}}}{I^{\partial}(\infty)}\right]<\infty.
\end{equation*}
In fact, from the definitions of $I^{\mathrm{H}}$ and $I^{\partial}$, the above expectation term is almost the same as the joint moment of the bulk/boundary quotient of Gaussian multiplicative chaos measures studied in~\cite[Theorem~1]{Huang:2025aa}, but with the maximum of the Brownian motion removed as a global constant in both the nominator and the denominator. Therefore, it is not surprising that the proof relies quite heavily on the preliminary results of~\cite{Huang:2025aa}. It is useful for future propose to record the following lemma for general $(p,q)$-quotients.
\begin{lemm}[Finiteness of the tail profile constant]\label{lemm:finiteness_of_the_tail_profile_constant}
For any $q>0$ and $0\leq p<\min(\frac{2}{\gamma^2}+\frac{q}{2},\frac{4}{\gamma^2})$,
\begin{equation*}
    \mathbb{E}\left[\frac{I^{\mathrm{H}}(\infty)^{p}}{I^{\partial}(\infty)^{q}}\right]<\infty.
\end{equation*}
\end{lemm}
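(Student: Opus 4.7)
The plan is to deduce the finiteness of $\mathbb{E}[I^{\mathrm{H}}(\infty)^{p}/I^{\partial}(\infty)^{q}]$ from the joint moment bound of~\cite[Theorem~1]{Huang:2025aa} for the physical localized bulk/boundary quotient, via a pre-/post-maximum decomposition arising from Williams' theorem (Section~\ref{subse:williams_decomposition}). This mirrors the strategy of~\cite[Lemma~3.3]{Kupiainen_2020} in the classical bulk setting, now adapted to the bulk/boundary ratio.

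After Williams' decomposition of the two-sided drifted Brownian motion $B^{\gamma}$, normalizing so that the maximum of the underlying unconditioned path is attained at $s = 0$, I decompose
\begin{equation*}
I^{\mathrm{H}}(\infty) = I^{\mathrm{H}}_{\mathrm{pre}} + I^{\mathrm{H}}_{\mathrm{post}},\qquad
I^{\partial}(\infty) = I^{\partial}_{\mathrm{pre}} + I^{\partial}_{\mathrm{post}},
\end{equation*}
with $I^{\bullet}_{\mathrm{pre}} = \int_{-\infty}^{0}\cdots$ and $I^{\bullet}_{\mathrm{post}} = \int_{0}^{\infty}\cdots$. Using the trivial lower bound $I^{\partial}(\infty) \geq I^{\partial}_{\mathrm{post}}$ in the denominator together with the convexity bound $(a+b)^{p}\leq C_{p}(a^{p}+b^{p})$ in the numerator, the problem splits into two pieces, $(\mathrm{I}) := \mathbb{E}[(I^{\mathrm{H}}_{\mathrm{pre}})^{p}/(I^{\partial}_{\mathrm{post}})^{q}]$ and $(\mathrm{II}) := \mathbb{E}[(I^{\mathrm{H}}_{\mathrm{post}})^{p}/(I^{\partial}_{\mathrm{post}})^{q}]$.

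Term $(\mathrm{I})$ is handled by H\"older's inequality with exponents $(a,b)$, $1/a+1/b=1$, and $a$ close to $1$:
\begin{equation*}
\mathbb{E}\!\left[\frac{(I^{\mathrm{H}}_{\mathrm{pre}})^{p}}{(I^{\partial}_{\mathrm{post}})^{q}}\right]
\leq \mathbb{E}[(I^{\mathrm{H}}_{\mathrm{pre}})^{pa}]^{1/a}\,\mathbb{E}[(I^{\partial}_{\mathrm{post}})^{-qb}]^{1/b}.
\end{equation*}
The positive moment factor is finite for $pa<4/\gamma^{2}$ by the classical two-dimensional Gaussian multiplicative chaos moment bound applied to the pre-max region (where no boundary singularity is present), while the negative moment factor is finite for every positive order by standard negative moment bounds for Gaussian multiplicative chaos; hence $(\mathrm{I})$ is finite under $p<4/\gamma^{2}$. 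For term $(\mathrm{II})$, which only involves post-max quantities, I use the radial representation~\eqref{eq:RadialRepresentations} to identify $(I^{\mathrm{H}}_{\mathrm{post}},I^{\partial}_{\mathrm{post}})$ with the physical localized bulk and boundary measures $\mu^{\mathrm{H}}_{0}(Q(0,1))$ and $\mu^{\partial}_{0}(I(0,1))$ after factoring out the common maximum contributions $e^{\gamma M}$ and $e^{\gamma M/2}$. Applying \cite[Theorem~1]{Huang:2025aa} then yields finiteness under $p<\frac{2}{\gamma^{2}}+\frac{q}{2}$. Combining both conditions gives the range $p<\min(\frac{4}{\gamma^{2}},\frac{2}{\gamma^{2}}+\frac{q}{2})$ claimed by the lemma.

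The main obstacle is the identification in $(\mathrm{II})$ of the post-max integrals with quantities directly controlled by \cite[Theorem~1]{Huang:2025aa}. One must carefully reconcile the $\gamma$-drift of $B^{\gamma}$ in the post-max regime with the effective drift arising in the radial decomposition of the localized physical measures, which requires tracking both the Girsanov reweighting by the auxiliary boundary Gaussian multiplicative chaos (as in the localization trick of Section~\ref{subse:on_the_localization_strategy}) and the Williams conditioning on the maximum of the drifted Brownian motion together. Once this identification is established, the moment bound of the companion paper applies directly and the proof concludes without further technical work.
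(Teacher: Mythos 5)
There is a genuine gap, and it sits at the heart of term $(\mathrm{I})$. Your claim that the pre-maximum piece $I^{\mathrm{H}}_{\mathrm{pre}}$ lives in a region ``where no boundary singularity is present'' and therefore has positive moments up to $\frac{4}{\gamma^2}$ misreads the geometry of the decomposition: the Williams/radial decomposition of Sections~\ref{subse:semi_circle_polar_decomposition}--\ref{subse:williams_decomposition} splits \emph{radial scales} $s$, not the angular direction, and the lateral field $Z^{\mathrm{H}}_s=\int_0^\pi(\sin\theta)^{-\frac{\gamma^2}{2}}e^{\gamma Y^{\mathrm{H}}(s,\theta)-\frac{\gamma^2}{2}\mathbb{E}[Y^{\mathrm{H}}(s,\theta)^2]}d\theta$ carries the uniform boundary singularity at $\theta\in\{0,\pi\}$ for \emph{every} $s$, pre- and post-maximum alike. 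Consequently the marginal critical exponent of $I^{\mathrm{H}}_{\mathrm{pre}}$ (and of $I^{\mathrm{H}}(\infty)$ itself) is $\frac{2}{\gamma^2}$, exactly as in~\eqref{eq:Huang23Result}, not $\frac{4}{\gamma^2}$. Since the lemma is needed precisely for $p=\frac{2}{\gamma^2}$, $q=1$ (and more generally for $p$ up to $\frac{2}{\gamma^2}+\frac{q}{2}$), your H\"older bound $\mathbb{E}[(I^{\mathrm{H}}_{\mathrm{pre}})^{pa}]^{1/a}\,\mathbb{E}[(I^{\partial}_{\mathrm{post}})^{-qb}]^{1/b}$ is infinite in the whole regime of interest. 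The extra $+\frac{q}{2}$ in the admissible range is a genuinely joint effect: a boundary spike that inflates the bulk numerator simultaneously inflates the boundary denominator, and this compensation is destroyed twice in your argument --- once when you throw away $I^{\partial}_{\mathrm{pre}}$ from the denominator, and once when you decouple numerator and denominator by H\"older. Term $(\mathrm{II})$ is also not closed: the identification of the post-max integrals with the physical localized measures (which you yourself flag as ``the main obstacle'') is delicate because the localized measures contain the heavy-tailed factor $e^{\gamma M}$, and the companion paper's results are not stated for that object on a region containing the insertion.

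For comparison, the paper keeps the quotient structure intact at every radial scale: it decomposes \emph{both} numerator and denominator over matching unit intervals $[n,n+1]$, strips off the conditioned Brownian motion by the Markov property, sup/inf bounds and Cauchy--Schwarz, invokes \cite[Theorem~2]{Huang:2025aa} for the scale-free quotient $\mathbb{E}\bigl[(\int_0^1 Z^{\mathrm{H}}_s ds)^p/(\int_0^1 Z^{\partial}_s ds)^q\bigr]$, extracts exponential decay in $n$ from the negative drift (as in \cite[Lemma~3.3]{Kupiainen_2020}), sums via subadditivity or Minkowski, and treats the regime $q\geq 2p$ separately by H\"older against a negative moment of $I^{\partial}(\infty)$. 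If you want to salvage your scheme, you would have to keep the pre-max part of the boundary measure paired with the pre-max part of the bulk measure rather than discarding it, at which point you are essentially forced back to an interval-by-interval quotient argument of the paper's type.
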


We now briefly explain the strategy of the proof below, since the idea behind this proof is related to the main theorem of~\cite{Huang:2025aa}. The main heuristic behind~\cite[Theorem~2]{Huang:2025aa}, quickly resumed, is that ``the boundary measure scales like the square root of the bulk measure'', and we needed to separate the cases with $q<2p$ and $q\geq 2p$ since the bulk/boundary quotient behaves like a positive (resp. negative) moment of Gaussian multiplicative chaos, implying quite different steps of proofs. Here the phenomenon is similar (with the Brownian motion in the radial decomposition separated out), hence the distinction of cases below.

\begin{proof}
Recall first that when $p=0$, Lemma~\ref{lemm:finiteness_of_the_tail_profile_constant} is included in~\cite[Lemma~3.3]{Kupiainen_2020} and is shown in~\cite[Section~12.3]{Kupiainen_2020}. We will refer to these lemmas in the other cases.

$\bullet$ Suppose now $p>0$ and $q<2p$. We will show the following, that both
\begin{equation}\label{eq:LittleScaling_n_to_n+1}
    \mathbb{E}\left[\frac{(\int_{n}^{n+1}e^{\gamma B^{\gamma}_s}Z^{\mathrm{H}}_sds)^{p}}{(\int_{n}^{n+1}e^{\frac{\gamma}{2}B^{\gamma}_s}Z^{\partial}_s ds)^{q}}\right]\quad\text{and}\quad\mathbb{E}\left[\frac{(\int_{-n-1}^{-n}e^{\gamma B^{\gamma}_s}Z^{\mathrm{H}}_sds)^{p}}{(\int_{-n-1}^{-n}e^{\frac{\gamma}{2}B^{\gamma}_s}Z^{\partial}_sds)^{q}}\right]
\end{equation}
are exponentially decaying in $n$. We now treat the first expectation as the second one follows by symmetry.

We first write $B^{\gamma}_s=B^{\gamma}_n+\widetilde{B}_{s-n}^{\gamma}$ for $s\in[n,n+1]$, where $\widetilde{B}_{s-n}^{\gamma}=B_s^{\gamma}-B_n^{\gamma}$. Apply the strong Markov property at time $n$ to write
\begin{equation*}
    \mathbb{E}\left[\frac{(\int_{n}^{n+1}e^{\gamma B^{\gamma}_s}Z^{\mathrm{H}}_sds)^{p}}{(\int_{n}^{n+1}e^{\frac{\gamma}{2}B^{\gamma}_s}Z^{\partial}_s ds)^{q}}\right]=\mathbb{E}\left[(e^{\gamma B^{\gamma}_n})^{p-\frac{q}{2}}\right]\mathbb{E}\left[\frac{(\int_{n}^{n+1}e^{\gamma \widetilde{B}^{\gamma}_{s-n}}Z^{\mathrm{H}}_sds)^{p}}{(\int_{n}^{n+1}e^{\frac{\gamma}{2}\widetilde{B}^{\gamma}_{s-n}}Z^{\partial}_s ds)^{q}} \Big\mid B^{\gamma}_n\right]
\end{equation*}
Using the independence of the joint process $(Z^{\mathrm{H}}_s,Z^{\partial}_s)$ with the Brownian motions, we can bound the last expectation by
\begin{equation*}
    \mathbb{E}\left[\frac{\left(e^{\gamma\sup_{s\in[n,n+1]}\widetilde{B}^{\gamma}_{s-n}}\right)^{p}}{\left(e^{\frac{\gamma}{2}\inf_{s\in[n,n+1]}\widetilde{B}^{\gamma}_{s-n}}\right)^{q}}\Big\mid B^{\gamma}_n\right]\mathbb{E}\left[\frac{(\int_{0}^{1}Z^{\mathrm{H}}_sds)^{p}}{(\int_{0}^{1}Z^{\partial}_s ds)^{q}}\right]
\end{equation*}
where we used the translation invariance of the joint process $(Z^{\mathrm{H}}_s,Z^{\partial}_s)$. By~\cite[Section~12.3]{Kupiainen_2020}, we have, using the first part in the proof of~\cite[Lemma~3.3]{Kupiainen_2020}
\begin{equation*}
\begin{split}
    \mathbb{E}\left[(e^{\gamma B^{\gamma}_n})^{p-\frac{q}{2}}\right]&\leq\mathbb{E}\left[\mathbf{1}_{\{B_n-\frac{1}{\sqrt{2}}(\frac{2}{\gamma}-\frac{\gamma}{2})n\leq\sqrt{2}\}}e^{\sqrt{2}\gamma(p-\frac{q}{2})(B_n-\frac{1}{\sqrt{2}}(\frac{2}{\gamma}-\frac{\gamma}{2})n)}\right]\\
    &=Cn^{-\frac{1}{2}}\int_{-\infty}^{1}e^{\sqrt{2}\gamma(p-\frac{q}{2})y}e^{-\frac{(y+\frac{1}{\sqrt{2}}(\frac{2}{\gamma}-\frac{\gamma}{2})n)^2}{2n}}dy\\
    &\leq Ce^{-\epsilon n}.
\end{split}
\end{equation*}
Indeed, the first inequality in the above display comes from~\cite[Section~12.4]{Kupiainen_2020}, that $\frac{1}{\sqrt{2}}B^{\gamma}_s$ is stochastically dominated by a drifted Brownian motion with drift $-\frac{1}{\sqrt{2}}(\frac{2}{\gamma}-\frac{\gamma}{2})$ conditionned to stay below $1$.

Then by Cauchy-Schwarz and proceed again as in the proof of~\cite[Lemma~3.3]{Kupiainen_2020},
\begin{equation}\label{eq:eq4coro}
    \mathbb{E}\left[\frac{\left(e^{\gamma\sup_{s\in[n,n+1]}\widetilde{B}^{\gamma}_{s-n}}\right)^{p}}{\left(e^{\frac{\gamma}{2}\inf_{s\in[n,n+1]}\widetilde{B}^{\gamma}_{s-n}}\right)^{q}}\Big\mid B^{\gamma}_n\right]\leq\mathbb{E}\left[\left(e^{\gamma\sup_{s\in[n,n+1]}\widetilde{B}^{\gamma}_{s-n}}\right)^{2p}\Big\mid B^{\gamma}_n\right]\mathbb{E}\left[\left(e^{\frac{\gamma}{2}\inf_{s\in[n,n+1]}\widetilde{B}^{\gamma}_{s-n}}\right)^{-2q}\Big\mid B^{\gamma}_n\right]<\infty.
\end{equation}
Combining all the elements above yields the following upper bound:
\begin{equation*}
    \mathbb{E}\left[\frac{(\int_{n}^{n+1}e^{\gamma B^{\gamma}_s}Z^{\mathrm{H}}_sds)^{p}}{(\int_{n}^{n+1}e^{\frac{\gamma}{2} B^{\gamma}_s}Z^{\partial}_s ds)^{q}}\right]\leq Ce^{-\epsilon n}\mathbb{E}\left[\frac{(\int_{0}^{1}Z^{\mathrm{H}}_sds)^{p}}{(\int_{0}^{1}Z^{\partial}_s ds)^{q}}\right],
\end{equation*}
where the finiteness of the last expectation a direct consequence of~\cite[Theorem~2]{Huang:2025aa}. Thus we have established the exponential decay of~\eqref{eq:LittleScaling_n_to_n+1}.

From the exponential decay of~\eqref{eq:LittleScaling_n_to_n+1} follows Lemma~\ref{lemm:finiteness_of_the_tail_profile_constant}. Indeed,
\begin{enumerate}
    \item If $0<p\leq 1$, then the subadditivity inequality yields
    \begin{equation*}
        \mathbb{E}\left[\frac{I^{\mathrm{H}}(\infty)^{p}}{I^{\partial}(\infty)^{q}}\right]\leq\sum_{n\in\mathbb{Z}}\mathbb{E}\left[\frac{(\int_{n}^{n+1}e^{\gamma B^{\gamma}_s}Z^{\mathrm{H}}_sds)^{p}}{I^{\partial}(\infty)^{q}}\right] \leq \sum_{n\in\mathbb{Z}}\mathbb{E}\left[\frac{(\int_{n}^{n+1}e^{\gamma B^{\gamma}_s}Z^{\mathrm{H}}_sds)^{p}}{(\int_{n}^{n+1}e^{\frac{\gamma}{2} B^{\gamma}_s}Z^{\partial}_s ds)^{q}}\right]\leq\infty,
    \end{equation*}
    by the exponential decay property above.
    \item If $p>1$, then Minkowski's inequality for the $L_p$-norm yields similar conclusion:
    \begin{equation*}
        \mathbb{E}\left[\frac{I^{\mathrm{H}}(\infty)^{p}}{I^{\partial}(\infty)^{q}}\right]^{\frac{1}{p}}\leq\sum_{n\in\mathbb{Z}}\mathbb{E}\left[\frac{(\int_{n}^{n+1}e^{\gamma B^{\gamma}_s}Z^{\mathrm{H}}_sds)^{p}}{I^{\partial}(\infty)^{q}}\right]^{\frac{1}{p}} \leq \sum_{n\in\mathbb{Z}}\mathbb{E}\left[\frac{(\int_{n}^{n+1}e^{\gamma B^{\gamma}_s}Z^{\mathrm{H}}_sds)^{p}}{(\int_{n}^{n+1}e^{\frac{\gamma}{2} B^{\gamma}_s}Z^{\partial}_s ds)^{q}}\right]^{\frac{1}{p}}\leq\infty.
    \end{equation*}
\end{enumerate}
This finishes the proof of Lemma~\ref{lemm:finiteness_of_the_tail_profile_constant} in the case $p>0$ and $q<2p$.

$\bullet$ Suppose now that $p>0$ and $q\geq 2p$. It suffices to use the second part of the proof of~\cite[Lemma~3.3]{Kupiainen_2020}. Indeed, write $q=\alpha+(q-\alpha)$ with $\alpha$ slightly below $2p$ to be fixed in a moment, and use Hölder's inequality to write
\begin{equation*}
    \mathbb{E}\left[\frac{I^{\mathrm{H}}(\infty)^{p}}{I^{\partial}(\infty)^{q}}\right]\leq\mathbb{E}\left[\frac{I^{\mathrm{H}}(\infty)^{mp}}{I^{\partial}(\infty)^{m\alpha}}\right]^{\frac{1}{m}}\mathbb{E}\left[I^{\partial}(\infty)^{-m'(q-\alpha)}\right]^{\frac{1}{m'}},
\end{equation*}
with positive conjugate parameters $\frac{1}{m}+\frac{1}{m'}=1$ and $m$ slightly above $1$ to be fixed. The second expectation is finite by~\cite[Lemma~3.3]{Kupiainen_2020} for all $\alpha<q$. By the previous part of the proof, the first expectation is finite when $mp<\min(\frac{2}{\gamma^2}+\frac{m\alpha}{2},\frac{4}{\gamma^2})$ and $2mp>m\alpha$: one checks that this is possible when $m$ is slightly above $1$ and $\alpha$ is slightly below $2p$.
\end{proof}

In particular, with $(p,q)=(\frac{2}{\gamma^2},1)$ and $\gamma\in(0,2)$, we have shown that the constant $C_{(1,0)}$ in Theorem~\ref{th:main_result} is finite. Examining the proof above, we also get the following corollaries:
\begin{coro}\label{coro:B_B}
For any $q>0$ and $p<\min(\frac{2}{\gamma^2}+\frac{q}{2},\frac{4}{\gamma^2})$,
\begin{equation*}
    \mathbb{E}\left[\frac{(\int_{0}^{\infty}e^{\gamma B^{\gamma}_s}Z^{\mathrm{H}}_sds)^{p}}{(\int_{0}^{\infty}e^{\frac{\gamma}{2}B^{\gamma}_s}Z^{\partial}_s ds)^{q}}\right]<\infty.
\end{equation*}
\end{coro}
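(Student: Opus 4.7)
The plan is to observe that Corollary \ref{coro:B_B} is essentially a one-sided restriction of Lemma \ref{lemm:finiteness_of_the_tail_profile_constant}, and its proof reuses the same two-step strategy (splitting according to whether $q<2p$ or $q\geq 2p$) with no new ingredients beyond what has already been established. The key observation is that the block estimates proved over $[n,n+1]$ for every $n\in\mathbb{Z}$ in the proof of Lemma \ref{lemm:finiteness_of_the_tail_profile_constant} are already valid in particular for $n\geq 0$, which is all that is needed here.

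More concretely, for the case $p>0$ and $q<2p$, I would reuse the exponential decay estimate
\begin{equation*}
\mathbb{E}\left[\frac{(\int_{n}^{n+1}e^{\gamma B^{\gamma}_s}Z^{\mathrm{H}}_sds)^{p}}{(\int_{n}^{n+1}e^{\frac{\gamma}{2}B^{\gamma}_s}Z^{\partial}_s ds)^{q}}\right]\leq Ce^{-\epsilon n},\quad n\geq 0,
\end{equation*}
proved via the strong Markov property at time $n$ and the bound $\mathbb{E}[(e^{\gamma B^\gamma_n})^{p-q/2}]\leq Ce^{-\epsilon n}$ coming from the conditioning of $B^\gamma$ to stay below $1$. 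To conclude, I would restrict the summation to $n\geq 0$ and apply subadditivity of $x\mapsto x^p$ (when $p\leq 1$) or Minkowski's inequality for the $L^p$-norm (when $p>1$), each time using the trivial lower bound $\int_0^\infty e^{\frac{\gamma}{2}B^\gamma_s}Z^\partial_s\,ds\geq \int_n^{n+1} e^{\frac{\gamma}{2}B^\gamma_s}Z^\partial_s\,ds$ in the denominator. This reproduces the two-case conclusion of Lemma \ref{lemm:finiteness_of_the_tail_profile_constant} with the bilateral sum replaced by its non-negative part.

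For the case $q\geq 2p$, I would apply Hölder's inequality exactly as in the lemma: writing $q=\alpha+(q-\alpha)$ with $\alpha$ slightly below $2p$ and conjugate exponents $\frac{1}{m}+\frac{1}{m'}=1$ with $m$ slightly above $1$, one obtains
\begin{equation*}
\mathbb{E}\left[\frac{(\int_{0}^{\infty}e^{\gamma B^{\gamma}_s}Z^{\mathrm{H}}_sds)^{p}}{(\int_{0}^{\infty}e^{\frac{\gamma}{2}B^{\gamma}_s}Z^{\partial}_s ds)^{q}}\right]\leq\mathbb{E}\left[\frac{(\int_{0}^{\infty}e^{\gamma B^{\gamma}_s}Z^{\mathrm{H}}_sds)^{mp}}{(\int_{0}^{\infty}e^{\frac{\gamma}{2}B^{\gamma}_s}Z^{\partial}_s ds)^{m\alpha}}\right]^{\frac{1}{m}}\mathbb{E}\left[\left(\int_{0}^{\infty}e^{\frac{\gamma}{2}B^{\gamma}_s}Z^{\partial}_s ds\right)^{-m'(q-\alpha)}\right]^{\frac{1}{m'}}.
\end{equation*}
The first factor is finite by the previous case, since $m\alpha<2mp$ and $mp<\min(\frac{2}{\gamma^2}+\frac{m\alpha}{2},\frac{4}{\gamma^2})$ for $m$ close enough to $1$ and $\alpha$ close enough to $2p$; the second factor reduces to the $p=0$ instance and is finite by the arguments of \cite[Lemma~3.3]{Kupiainen_2020}. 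I do not expect any genuinely new obstacle in this corollary; the only point requiring attention is that the negative-moment estimate of \cite[Lemma~3.3]{Kupiainen_2020}, invoked in the proof of Lemma \ref{lemm:finiteness_of_the_tail_profile_constant} for the bilateral integral, remains valid for the one-sided integral $\int_0^\infty e^{\frac{\gamma}{2}B^\gamma_s}Z^\partial_s\,ds$. This is a straightforward adaptation of the same block decomposition and exponential decay argument, so the whole corollary follows from a direct rerun of the proof of Lemma \ref{lemm:finiteness_of_the_tail_profile_constant} with the negative-$n$ blocks discarded.
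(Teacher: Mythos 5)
Your proposal is correct and follows essentially the same route as the paper, which proves this corollary precisely by rerunning the block decomposition and exponential decay argument of Lemma~\ref{lemm:finiteness_of_the_tail_profile_constant} (strong Markov property at integer times, subadditivity or Minkowski according to $p\leq 1$ or $p>1$, then H\"older with $\alpha$ slightly below $2p$ when $q\geq 2p$) with the integrals restricted to $[0,\infty)$. The only point you flag, negative moments of the one-sided boundary integral, is indeed harmless since $\int_0^\infty e^{\frac{\gamma}{2}B^{\gamma}_s}Z^{\partial}_s\,ds\geq\int_0^1 e^{\frac{\gamma}{2}B^{\gamma}_s}Z^{\partial}_s\,ds$ and the latter has all negative moments by the same arguments as in~\cite[Lemma~3.3]{Kupiainen_2020}.
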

\begin{coro}\label{coro:supM}
For any $q>0$ and $p<\min(\frac{2}{\gamma^2}+\frac{q}{2},\frac{4}{\gamma^2})$,
\begin{equation*}
    \sup_{x>0}\mathbb{E}\left[\frac{I^{\mathrm{H}}(x)^{p}}{I^{\partial}(x)^{q}}\right]<\infty.
\end{equation*}
\end{coro}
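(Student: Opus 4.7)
The plan is to first reduce the supremum over $x$ to a single expectation with $x$-independent denominator via monotonicity, and then adapt the exponential-decay strategy from the proof of Lemma \ref{lemm:finiteness_of_the_tail_profile_constant}. By Remark \ref{rema:TrivialMonotonicity} one has $I^{\mathrm{H}}(x) \leq I^{\mathrm{H}}(\infty)$, and since $L_{-x} \geq 0$ for every $x > 0$, the interval $[0, \infty)$ lies inside $[-L_{-x}, \infty)$, so $I^{\partial}(x) \geq B^{\partial} \coloneqq \int_{0}^{\infty} e^{\frac{\gamma}{2} B^{\gamma}_s} Z^{\partial}_s \, ds$ uniformly in $x > 0$. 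Combining these two pointwise bounds, it suffices to show
\begin{equation*}
    \mathbb{E}\left[\frac{I^{\mathrm{H}}(\infty)^p}{(B^{\partial})^q}\right] < \infty
\end{equation*}
in the stated range of $(p,q)$.

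The strategy mirrors the proof of Lemma \ref{lemm:finiteness_of_the_tail_profile_constant}. Assuming first $q < 2p$, I would split $I^{\mathrm{H}}(\infty) = \sum_{n \in \mathbb{Z}} \int_{n}^{n+1} e^{\gamma B^{\gamma}_s} Z^{\mathrm{H}}_s \, ds$ and apply Minkowski (for $p > 1$) or subadditivity (for $p \leq 1$) to the numerator, reducing everything to the convergence of $\sum_{n \in \mathbb{Z}} \mathbb{E}[(\int_{n}^{n+1} e^{\gamma B^{\gamma}_s} Z^{\mathrm{H}}_s ds)^{p} / (B^{\partial})^q]^{1/(p \vee 1)}$. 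For $n \geq 0$, the interval $[n, n+1]$ is contained in the integration range $[0, \infty)$ of $B^{\partial}$, so the summand is majorized by the coupled quantity appearing in~\eqref{eq:LittleScaling_n_to_n+1}, whose exponential decay in $n$ was established inside the proof of Lemma \ref{lemm:finiteness_of_the_tail_profile_constant}. For $n < 0$ the coupling is lost, and I would instead apply Hölder with conjugate exponents $(m, m')$, $m$ slightly above $1$, to decouple:
\begin{equation*}
    \mathbb{E}\left[\frac{\bigl(\int_{n}^{n+1} e^{\gamma B^{\gamma}_s} Z^{\mathrm{H}}_s ds\bigr)^{p}}{(B^{\partial})^q}\right] \leq \mathbb{E}\left[\bigl(\int_{n}^{n+1} e^{\gamma B^{\gamma}_s} Z^{\mathrm{H}}_s ds\bigr)^{pm}\right]^{1/m}\mathbb{E}\left[(B^{\partial})^{-qm'}\right]^{1/m'}.
\end{equation*}
The negative-moment factor is finite for all $q, m' > 0$ by the $p=0$ case of~\cite[Lemma~3.3]{Kupiainen_2020}, and the strong Markov property at time $n$, combined with the $B$/$Y^{\mathrm{H}}$ independence of Section \ref{subse:semi_circle_polar_decomposition}, reduces the first factor to a constant multiple of $\mathbb{E}[e^{pm\gamma B^{\gamma}_{n}}]$ (the constant being finite whenever $pm < 4/\gamma^2$), which decays exponentially in $|n|$ by the stochastic-domination computation at~\eqref{eq:eq4coro}. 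Since $p < 4/\gamma^2$, we can take $m > 1$ close enough to $1$ to ensure $pm < 4/\gamma^2$, and the resulting sum converges.

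The complementary regime $q \geq 2p$ is handled exactly by the Hölder reduction from the second part of the proof of Lemma \ref{lemm:finiteness_of_the_tail_profile_constant}: writing $q = \alpha + (q - \alpha)$ with $\alpha$ slightly below $2p$ and using conjugate exponents $(m, m')$, one splits the quotient as $\mathbb{E}[I^{\mathrm{H}}(\infty)^{pm}/(B^{\partial})^{m\alpha}]^{1/m} \mathbb{E}[(B^{\partial})^{-m'(q-\alpha)}]^{1/m'}$; the first factor falls into the regime $m\alpha < 2pm$ already handled, and the second is a finite negative moment. The main technical obstacle is precisely the $n < 0$ estimate in the previous paragraph, where the loss of numerator/denominator coupling specific to our modification shows up and cannot be circumvented by the original Rhodes--Vargas trick; everything hinges on the strict inequality $p < 4/\gamma^2$ leaving enough room to perform the decoupling Hölder with $m > 1$.
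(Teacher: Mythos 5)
Your reduction step already loses the statement in the range where it is actually needed. Replacing $I^{\partial}(x)$ by $B^{\partial}=\int_{0}^{\infty}e^{\frac{\gamma}{2}B^{\gamma}_s}Z^{\partial}_s\,ds$ throws away the boundary mass on $[-L_{-x},0]$, and it is exactly this mass that compensates the numerator's contribution from negative times. Concretely, each block $\int_{n}^{n+1}e^{\gamma B^{\gamma}_s}Z^{\mathrm{H}}_s\,ds$ with $n<0$ still carries the uniform boundary singularity through the weight $(\sin\theta)^{-\frac{\gamma^2}{2}}$ inside $Z^{\mathrm{H}}$, so its positive moments are finite only for exponents below $\frac{2}{\gamma^2}$ (this is~\eqref{eq:Huang23Result}, transported by the radial decomposition), not below $\frac{4}{\gamma^2}$ as you assert in the Hölder decoupling: the ``constant'' $\mathbb{E}\bigl[(\int_{0}^{1}Z^{\mathrm{H}}_s\,ds)^{pm}\bigr]$ is infinite as soon as $pm\ge\frac{2}{\gamma^2}$. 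Since the corollary must cover $p$ up to $\min(\frac{2}{\gamma^2}+\frac{q}{2},\frac{4}{\gamma^2})$ --- and it is invoked in Section~\ref{sec:expression_of_the_universal_constant} with $p=\frac{2}{\gamma^2}$ and with $p$ slightly above $\frac{2}{\gamma^2}$ --- your bound for the $n<0$ blocks is vacuous exactly where it matters. Worse, the intermediate target $\mathbb{E}\bigl[I^{\mathrm{H}}(\infty)^{p}/(B^{\partial})^{q}\bigr]<\infty$ is itself not expected to hold for $p>\frac{2}{\gamma^2}$: the lateral noise at times $s\in[n,n+1]$, $n\ll 0$, is asymptotically decorrelated from $Z^{\partial}_s$, $s\ge 0$ (the covariance of $Y^{\mathrm{H}}$ decays in the time separation), so the far-away denominator cannot restore integrability of a numerator block whose own $p$-th moment diverges; no choice of Hölder exponents repairs this, because the failure lies in the decoupling itself, not in the estimate.

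The paper's proof never separates numerator from denominator: it bounds $\mathbb{E}\bigl[I^{\mathrm{H}}(x)^{p}/I^{\partial}(x)^{q}\bigr]$ by a sum of same-block quotients $\mathbb{E}\bigl[(\int_{n}^{n+1}e^{\gamma B^{\gamma}_s}Z^{\mathrm{H}}_s\,ds)^{p}/(\int_{n}^{n+1}e^{\frac{\gamma}{2}B^{\gamma}_s}Z^{\partial}_s\,ds)^{q}\bigr]$ over the full unit blocks of the common integration range --- each finite by~\cite[Theorem~2]{Huang:2025aa} and exponentially decaying as in the proof of Lemma~\ref{lemm:finiteness_of_the_tail_profile_constant} --- plus one partial block at the endpoint, which is bounded uniformly in $x$ by the Cauchy--Schwarz argument of~\eqref{eq:eq4coro}; the regime $p\le\frac{q}{2}$ then follows by the same Hölder reduction as in the lemma. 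Your treatment of the $n\ge 0$ blocks (minorizing $B^{\partial}$ by the same-block boundary mass) is the right idea; to fix the proposal you must keep that block-by-block coupling on the negative-time side as well, i.e.\ work with the full denominator $I^{\partial}(x)$ rather than $B^{\partial}$.
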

Notice that the function $x\mapsto \mathbb{E}\left[\frac{I^{\mathrm{H}}(x)^{p}}{I^{\partial}(x)^{q}}\right]$ is not necessarily monotonic on $\mathbb{R}_{>0}$, so that this corollary requires some explanation.
\begin{proof}
When $p>\frac{q}{2}$, it suffices to replace the upper bound in the previous proof (say when $0<p\leq 1$)
\begin{equation*}
    \mathbb{E}\left[\frac{I^{\mathrm{H}}(\infty)^{p}}{I^{\partial}(\infty)^{q}}\right]\leq\sum_{n\in\mathbb{Z}}\mathbb{E}\left[\frac{(\int_{n}^{n+1}e^{\gamma B^{\gamma}_s}Z^{\mathrm{H}}_sds)^{p}}{(\int_{n}^{n+1}e^{\frac{\gamma}{2} B^{\gamma}_s}Z^{\partial}_s ds)^{q}}\right]
\end{equation*}
by
\begin{equation*}
    \mathbb{E}\left[\frac{I^{\mathrm{H}}(x)^{p}}{I^{\partial}(x)^{q}}\right]\leq\sum_{n\leq x-1}\mathbb{E}\left[\frac{(\int_{n}^{n+1}e^{\gamma B^{\gamma}_s}Z^{\mathrm{H}}_sds)^{p}}{(\int_{n}^{n+1}e^{\frac{\gamma}{2} B^{\gamma}_s}Z^{\partial}_s ds)^{q}}\right]+\mathbb{E}\left[\frac{(\int_{\lfloor x\rfloor}^{x}e^{\gamma B^{\gamma}_s}Z^{\mathrm{H}}_sds)^{p}}{(\int_{\lfloor x\rfloor}^{x}e^{\frac{\gamma}{2} B^{\gamma}_s}Z^{\partial}_s ds)^{q}}\right],
\end{equation*}
and the last term is uniformly bounded by the same argument as in~\eqref{eq:eq4coro}. The proof when $p\leq\frac{q}{2}$ then follows from the same Hölder inequality argument.
\end{proof}

With the radial decomposition~\eqref{eq:RadialRepresentations} and the preliminary Lemma~\ref{lemm:finiteness_of_the_tail_profile_constant}, we are now ready to study the localized expression
\begin{equation}\label{eq:LocalizedExpression_I}
    \mathbb{E}\left[\frac{\mathbf{1}_{\{\mu^{\mathrm{H}}_0(Q(0,\rho))>t\}}}{\mu^{\partial}_0(I(0,\rho))}\right]=\mathbb{E}\left[\frac{\mathbf{1}_{\{\rho^{2-\frac{\gamma^2}{2}}e^{\gamma N_{\rho}}e^{\gamma M}I^{\mathrm{H}}(M)>t\}}}{\rho^{1-\frac{\gamma^2}{4}}e^{\frac{\gamma}{2}N_{\rho}}e^{\frac{\gamma}{2}M}I^{\partial}(M)}\right].
\end{equation}

\subsection{Lower bound estimate}\label{subse:lower_bound_estimate}
We first establish a lower bound for~\eqref{eq:LocalizedExpression_I}. We are \emph{almost} able to use the exact formula for the maximum $M$, recalled in Section~\ref{subse:explicit_calculations_on_the_maximum_of_a_drifted_brownian_motion}, if we replace $I^{\mathrm{H}}(M)$ and $I^{\partial}(M)$ by respectively $I^{\mathrm{H}}(\infty)$ and $I^{\partial}(\infty)$. Indeed, one can check by a formal calculation similar to that in Section~\ref{subse:heuristics_with_drifted_brownian_motions} that this would give exactly the desired formula announced in Theorem~\ref{th:main_result}. We will now justify rigorously this replacement by restricting ourselves to well-chosen events.

The general scheme follows that of~\cite{Rhodes_2019} but we need several extra technical treatments due to the bulk/boundary correlations. We begin by lower bounding the above display by restricting ourselves to the event $\{M>\frac{\eta}{\gamma}\ln t\}$ for some $\eta>\delta$ such that:
\begin{equation}\label{eq:ChoiceOfParameters}
    \frac{2}{\gamma^2}<p<\frac{2}{\gamma^2}+\Delta p\quad\text{and}\quad \eta>\delta\quad\text{and}\quad p(1-\eta)>\frac{2}{\gamma^2}+\delta.
\end{equation}
Above, $\Delta p$ is some small positive quantity that we fix later: one should think of taking $p$ slightly above $\frac{2}{\gamma^2}$ when this condition is invoked. By Remark~\ref{rema:TrivialMonotonicity}, on this event one has $I^{\mathrm{H}}(M)\geq I^{\mathrm{H}}(\frac{\eta}{\gamma}\ln t)$. We have then
\begin{equation*}
\begin{split}
    \mathbb{E}\left[\frac{\mathbf{1}_{\{\rho^{2-\frac{\gamma^2}{2}}e^{\gamma N_{\rho}}e^{\gamma M}I^{\mathrm{H}}(M)>t\}}}{\rho^{1-\frac{\gamma^2}{4}}e^{\frac{\gamma}{2}N_{\rho}}e^{\frac{\gamma}{2}M}I^{\partial}(M)}\right]&\geq \mathbb{E}\left[\frac{\mathbf{1}_{\{\rho^{2-\frac{\gamma^2}{2}}e^{\gamma N_{\rho}}e^{\gamma M}I^{\mathrm{H}}(M)>t\}}\mathbf{1}_{\{\gamma M>\eta\ln t\}}}{\rho^{1-\frac{\gamma^2}{4}}e^{\frac{\gamma}{2}N_{\rho}}e^{\frac{\gamma}{2}M}I^{\partial}(M)}\right]\\
    &\geq\mathbb{E}\left[\frac{\mathbf{1}_{\{\rho^{2-\frac{\gamma^2}{2}}e^{\gamma N_{\rho}}e^{\gamma M}I^{\mathrm{H}}(\frac{\eta}{\gamma}\ln t)>t\}}\mathbf{1}_{\{\gamma M>\eta\ln t\}}}{\rho^{1-\frac{\gamma^2}{4}}e^{\frac{\gamma}{2}N_{\rho}}e^{\frac{\gamma}{2}M}I^{\partial}(\infty)}\right]\\
    &=\mathbb{E}\left[\frac{\mathbf{1}_{\{\rho^{2-\frac{\gamma^2}{2}}e^{\gamma N_{\rho}}e^{\gamma M}I^{\mathrm{H}}(\frac{\eta}{\gamma}\ln t)>t\}}}{\rho^{1-\frac{\gamma^2}{4}}e^{\frac{\gamma}{2}N_{\rho}}e^{\frac{\gamma}{2}M}I^{\partial}(\infty)}\right]\\
    &\quad-\mathbb{E}\left[\frac{\mathbf{1}_{\{\rho^{2-\frac{\gamma^2}{2}}e^{\gamma N_{\rho}}e^{\gamma M}I^{\mathrm{H}}(\frac{\eta}{\gamma}\ln t)>t\}}\mathbf{1}_{\{\gamma M<\eta\ln t\}}}{\rho^{1-\frac{\gamma^2}{4}}e^{\frac{\gamma}{2}N_{\rho}}e^{\frac{\gamma}{2}M}I^{\partial}(\infty)}\right].
\end{split}
\end{equation*}

\begin{enumerate}
    \item We first get rid of the second expectation term above by showing that it is of order $o(t^{-\frac{2}{\gamma^2}-\delta})$. Indeed, notice that
    \begin{equation*}
        \{\rho^{2-\frac{\gamma^2}{2}}e^{\gamma N_{\rho}}e^{\gamma M}I^{\mathrm{H}}(\frac{\eta}{\gamma}\ln t)>t\}\cap\{\gamma M<\eta\ln t\}\subset\{\rho^{2-\frac{\gamma^2}{2}}e^{\gamma N_{\rho}}I^{\mathrm{H}}(\infty)>t^{1-\eta}\},
    \end{equation*}
    in such a way that for $p>0$,
    \begin{equation*}
    \begin{split}
        \mathbb{E}\left[\frac{\mathbf{1}_{\{\rho^{2-\frac{\gamma^2}{2}}e^{\gamma N_{\rho}}e^{\gamma M}I^{\mathrm{H}}(\frac{\eta}{\gamma}\ln t)>t\}}\mathbf{1}_{\{\gamma M<\eta\ln t\}}}{\rho^{1-\frac{\gamma^2}{4}}e^{\frac{\gamma}{2}N_{\rho}}e^{\frac{\gamma}{2}M}I^{\partial}(\infty)}\right]&\leq \mathbb{E}\left[\frac{\mathbf{1}_{\{\rho^{2-\frac{\gamma^2}{2}}e^{\gamma N_{\rho}}I^{\mathrm{H}}(\infty)>t^{1-\eta}\}}}{\rho^{1-\frac{\gamma^2}{4}}e^{\frac{\gamma}{2}N_{\rho}}e^{\frac{\gamma}{2}M}I^{\partial}(\infty)}\right]\\
        &\leq Ct^{-(1-\eta)p}\mathbb{E}\left[(\rho^{2-\frac{\gamma^2}{2}}e^{\gamma N_{\rho}})^{p-\frac{1}{2}}\right]\mathbb{E}\left[\frac{I^{\mathrm{H}}(\infty)^{p}}{I^{\partial}(\infty)}\right]\\
        &=C\rho^{(p-\frac{1}{2})(2-\gamma^2p)}t^{-(1-\eta)p}\mathbb{E}\left[\frac{I^{\mathrm{H}}(\infty)^{p}}{I^{\partial}(\infty)}\right],
    \end{split}
    \end{equation*}
    where we factorized out the terms with $M$ and $N_\rho$ by independence. With our choice of $p$ in~\eqref{eq:ChoiceOfParameters}, we have $(1-\eta)p>\frac{2}{\gamma^2}+\delta$ and the final expectation term is finite when $p$ is only slightly above $\frac{2}{\gamma^2}$ by Lemma~\ref{lemm:finiteness_of_the_tail_profile_constant}. The $\rho$-coefficient is $\kappa=-(p-\frac{1}{2})(2-\gamma^2p)\in (0,1)$ when $p$ is only slightly above $\frac{2}{\gamma^2}$ as well.
    
    \item For the first expectation term above, we use the exact distribution of $M$ (see Section~\ref{subse:explicit_calculations_on_the_maximum_of_a_drifted_brownian_motion}) to get
    \begin{equation}\label{eq:UseOfExplicitLawOfM}
    \begin{split}
        \mathbb{E}\left[\frac{\mathbf{1}_{\{\rho^{2-\frac{\gamma^2}{2}}e^{\gamma N_{\rho}}e^{\gamma M}I^{\mathrm{H}}(\frac{\eta}{\gamma}\ln t)>t\}}}{\rho^{1-\frac{\gamma^2}{4}}e^{\frac{\gamma}{2}N_{\rho}}e^{\frac{\gamma}{2}M}I^{\partial}(\infty)}\right]=(1-\frac{\gamma^2}{4})\mathbb{E}\left[\frac{I^{\mathrm{H}}(\frac{\eta}{\gamma}\ln t)^{\frac{2}{\gamma^2}}}{I^{\partial}(\infty)}\right]t^{-\frac{2}{\gamma^2}}.
    \end{split}
    \end{equation}
    Notice that the last expression is independent of the choice of $\rho$. It remains to show that the cut-off at $I^{\mathrm{H}}(\frac{\eta}{\gamma}\ln t)$ is a good approximation of $I^{\mathrm{H}}(\infty)$, i.e.
    \begin{equation*}
        \mathbb{E}\left[\frac{I^{\mathrm{H}}(\infty)^{\frac{2}{\gamma^2}}}{I^{\partial}(\infty)}\right](1-o(t^{-\delta}))\leq \mathbb{E}\left[\frac{I^{\mathrm{H}}(\frac{\eta}{\gamma}\ln t)^{\frac{2}{\gamma^2}}}{I^{\partial}(\infty)}\right]\leq \mathbb{E}\left[\frac{I^{\mathrm{H}}(\infty)^{\frac{2}{\gamma^2}}}{I^{\partial}(\infty)}\right].
    \end{equation*}
    The second inequality follows from the monotonicity of $x\mapsto I^{\mathrm{H}}(x)$, see Remark~\ref{rema:TrivialMonotonicity}. The first inequality needs some careful analysis generalizing that in~\cite{Rhodes_2019}. Write with Markov's property the difference
    \begin{equation}\label{eq:DefinitionB_1}
    \begin{split}
        I^{\mathrm{H}}(\infty)-I^{\mathrm{H}}(\frac{\eta}{\gamma}\ln t)&=t^{-\eta}\int_{-\infty}^{0}e^{\gamma\widehat{B}^{\gamma}_s}Z^{\mathrm{H}}_{s-L_{-\frac{\eta}{\gamma}\ln t}}ds,\\
        I^{\partial}(\infty)-I^{\partial}(\frac{\eta}{\gamma}\ln t)&=t^{-\frac{\eta}{2}}\int_{-\infty}^{0}e^{\frac{\gamma}{2}\widehat{B}^{\gamma}_s}Z^{\partial}_{s-L_{-\frac{\eta}{\gamma}\ln t}}ds,
    \end{split}
    \end{equation}
    where the process $\widehat{B}^{\gamma}_s=B^{\gamma}_{s-L_{-\frac{\eta}{\gamma}\ln t}}+\frac{\eta}{\gamma}\ln t$ for $s\leq 0$ is independent of $(B^{\gamma}_s,Z^{\mathrm{H}}_s,Z^{\partial}_s)_{s\geq 0}$ and distributed as $(B^{\gamma}_s)_{s\leq 0}$. Set
    \begin{equation*}
        B^{\mathrm{H}}=\int_{-\infty}^{0}e^{\gamma\widehat{B}^{\gamma}_s}Z_{s-L_{-\frac{\eta}{\gamma}\ln t}}ds\quad\text{and}\quad B^{\partial}=\int_{-\infty}^{0}e^{\frac{\gamma}{2}\widehat{B}^{\gamma}_s}Z^{\partial}_{s-L_{-\frac{\eta}{\gamma}\ln t}}ds.
    \end{equation*}
    
    We distinguish two cases:
    \begin{enumerate}
        \item If $\frac{2}{\gamma^2}>1$, Minkowski's inequality for the $L_{\frac{2}{\gamma^2}}$-norm yields
        \begin{equation*}
        \begin{split}
            &\mathbb{E}\left[\frac{I^{\mathrm{H}}(\infty)^{\frac{2}{\gamma^2}}}{I^{\partial}(\infty)}\right]-\mathbb{E}\left[\frac{I^{\mathrm{H}}(\frac{\eta}{\gamma}\ln t)^{\frac{2}{\gamma^2}}}{I^{\partial}(\infty)}\right]\\
            \leq{}&\mathbb{E}\left[\left(\frac{I^{\mathrm{H}}(\infty)}{I^{\partial}(\infty)^{\frac{\gamma^2}{2}}}\right)^{\frac{2}{\gamma^2}}\right]-\mathbb{E}\left[\left(\frac{I^{\mathrm{H}}(\frac{\eta}{\gamma}\ln t)}{I^{\partial}(\infty)^{\frac{\gamma^2}{2}}}\right)^{\frac{2}{\gamma^2}}\right]\\
            ={}&\mathbb{E}\left[\left(\frac{I^{\mathrm{H}}(\frac{\eta}{\gamma}\ln t)+t^{-\eta}B^{\mathrm{H}}}{I^{\partial}(\infty)^{\frac{\gamma^2}{2}}}\right)^{\frac{2}{\gamma^2}}\right]-\mathbb{E}\left[\left(\frac{I^{\mathrm{H}}(\frac{\eta}{\gamma}\ln t)}{I^{\partial}(\infty)^{\frac{\gamma^2}{2}}}\right)^{\frac{2}{\gamma^2}}\right]\\
            \leq{}&\left(\mathbb{E}\left[\left(\frac{I^{\mathrm{H}}(\frac{\eta}{\gamma}\ln t)}{I^{\partial}(\infty)^{\frac{\gamma^2}{2}}}\right)^{\frac{2}{\gamma^2}}\right]^{\frac{\gamma^2}{2}}+\mathbb{E}\left[\left(\frac{t^{-\eta}B^{\mathrm{H}}}{I^{\partial}(\infty)^{\frac{\gamma^2}{2}}}\right)^{\frac{2}{\gamma^2}}\right]^{\frac{\gamma^2}{2}}\right)^{\frac{2}{\gamma^2}}-\mathbb{E}\left[\left(\frac{I^{\mathrm{H}}(\frac{\eta}{\gamma}\ln t)}{I^{\partial}(\infty)^{\frac{\gamma^2}{2}}}\right)^{\frac{2}{\gamma^2}}\right].
        \end{split}
        \end{equation*}
        Notice that the last expression is of the form $(\alpha_t+\beta_t)^{\frac{2}{\gamma^2}}-\alpha_t^{\frac{2}{\gamma^2}}$ with $\frac{\beta_t}{\alpha_t}\leq c$ uniformly for sufficiently large $t\geq t_0$. Indeed, notice that $\mathbb{E}\left[\frac{(B^{\mathrm{H}})^{\frac{2}{\gamma^2}}}{B^{\partial}}\right]<\infty$ using Corollary~\ref{coro:B_B} and $I^{\partial}(\infty)\geq t^{-\frac{\eta}{2}}B^{\partial}$, so as $t\to\infty$, $\alpha_t$ converges to a positive constant while $\beta_t$ is at most $O(t^{-\eta(1-\frac{\gamma^2}{4})})$. Since $(1+x)^{\frac{2}{\gamma^2}}-1\leq Cx$ for some $C$ uniformly in $0\leq x\leq c$, we have $(\alpha_t+\beta_t)^{\frac{2}{\gamma^2}}-\alpha_t^{\frac{2}{\gamma^2}}\leq C\alpha_t^{\frac{2}{\gamma^2}-1}\beta_t$. Plugging this in the above display yields
        \begin{equation*}
        \begin{split}
            \mathbb{E}\left[\frac{I^{\mathrm{H}}(\infty)^{\frac{2}{\gamma^2}}}{I^{\partial}(\infty)}\right]-\mathbb{E}\left[\frac{I^{\mathrm{H}}(\frac{\eta}{\gamma}\ln t)^{\frac{2}{\gamma^2}}}{I^{\partial}(\infty)}\right]&\leq C\mathbb{E}\left[\frac{I^{\mathrm{H}}(\frac{\eta}{\gamma}\ln t)^{\frac{2}{\gamma^2}}}{I^{\partial}(\infty)}\right]^{1-\frac{\gamma^2}{2}}\mathbb{E}\left[\frac{(t^{-\eta}B^{\mathrm{H}})^{\frac{2}{\gamma^2}}}{I^{\partial}(\infty)}\right]^{\frac{\gamma^2}{2}}\\
            &\leq Ct^{-\eta}\mathbb{E}\left[\frac{I^{\mathrm{H}}(\infty)^{\frac{2}{\gamma^2}}}{I^{\partial}(\infty)}\right]^{1-\frac{\gamma^2}{2}}\mathbb{E}\left[\frac{(B^{\mathrm{H}})^{\frac{2}{\gamma^2}}}{I^{\partial}(\infty)}\right]^{\frac{\gamma^2}{2}}.
        \end{split}
        \end{equation*}
        The first expectation is finite by Lemma~\ref{lemm:finiteness_of_the_tail_profile_constant}. For the second expectation term, we again need to invoke the main theorem of~\cite{Huang:2025aa}. First write
        \begin{equation*}
            I^{\partial}(\infty)=I^{\partial}(\frac{\eta}{\gamma}\ln t)+t^{-\frac{\eta}{2}}B^{\partial}
        \end{equation*}
        so that Hölder's inequality yields, for any $\eta'\in(0,1)$,
        \begin{equation*}
            \mathbb{E}\left[\frac{(B^{\mathrm{H}})^{\frac{2}{\gamma^2}}}{I^{\partial}(\infty)}\right]=\mathbb{E}\left[\frac{(B^{\mathrm{H}})^{\frac{2}{\gamma^2}}}{I^{\partial}(\frac{\eta}{\gamma}\ln t)+t^{-\frac{\eta}{2}}B^{\partial}}\right]\leq\mathbb{E}\left[\frac{(B^{\mathrm{H}})^{\frac{2}{\gamma^2}}}{I^{\partial}(\frac{\eta}{\gamma}\ln t)^{1-\eta'}(t^{-\frac{\eta}{2}}B^{\partial})^{\eta'}}\right]=t^{\eta'\frac{\eta}{2}}\mathbb{E}\left[\frac{(B^{\mathrm{H}})^{\frac{2}{\gamma^2}}}{I^{\partial}(\frac{\eta}{\gamma}\ln t)^{1-\eta'}(B^{\partial})^{\eta'}}\right].
        \end{equation*}
        Since $I^{\partial}(\frac{\eta}{\gamma}\ln t)$ has negative moments of any order and $\mathbb{E}\left[\frac{(B^{\mathrm{H}})^{\kappa\frac{2}{\gamma^2}}}{(B^{\partial})^{\kappa\eta'}}\right]<\infty$ for $\kappa$ close to $1^+$ in view of Corollary~\ref{coro:B_B} (the condition here is $\kappa(\frac{2}{\gamma^2}-\frac{\eta'}{2})<\frac{2}{\gamma^2}$), another standard use of Hölder's inequality proves our claim provided that $\eta'$ is chosen small enough.
        \item If $0<\frac{2}{\gamma^2}\leq 1$, we use the subadditivity inequality instead of Minkowski's inequality in the above equations. We omit the details as this modification has been used multiple times in this series of papers.
    \end{enumerate}
\end{enumerate}

The proof of the lower bound is now complete: the upshot is
\begin{equation}\label{eq:Upshot_Lowerbound}
    \mathbb{E}\left[\frac{\mathbf{1}_{\{\mu^{\mathrm{H}}_0(Q(0,\rho))>t\}}}{\mu^{\partial}_0(I(0,\rho))}\right]\geq (1-\frac{\gamma^2}{4})\mathbb{E}\left[\frac{I^{\mathrm{H}}(\frac{\eta}{\gamma}\ln t)^{\frac{2}{\gamma^2}}}{I^{\partial}(\infty)}\right]t^{-\frac{2}{\gamma^2}}+o(t^{-\frac{2}{\gamma^2}-\delta}).
\end{equation}

\subsection{Upper bound estimate}\label{subse:upper_bound_estimate}
We now establish the corresponding upper bound for~\eqref{eq:LocalizedExpression_I}. The strategy is similar: we want get rid of the $M$ dependence in either $I^{\mathrm{H}}(M)$ or $I^{\partial}(M)$, in order to use the exact law of $M$ recalled in Section~\ref{subse:explicit_calculations_on_the_maximum_of_a_drifted_brownian_motion} to get the desired formulas.

Indeed, if we can establish that
\begin{equation}\label{eq:UpperBoundApproximation}
    \mathbb{E}\left[\frac{\mathbf{1}_{\{\rho^{2-\frac{\gamma^2}{2}}e^{\gamma N_{\rho}}e^{\gamma M}I^{\mathrm{H}}(M)>t\}}}{\rho^{1-\frac{\gamma^2}{4}}e^{\frac{\gamma}{2}N_{\rho}}e^{\frac{\gamma}{2}M}I^{\partial}(M)}\right]-\mathbb{E}\left[\frac{\mathbf{1}_{\{\rho^{2-\frac{\gamma^2}{2}}e^{\gamma N_{\rho}}e^{\gamma M}I^{\mathrm{H}}(M)>t\}}}{\rho^{1-\frac{\gamma^2}{4}}e^{\frac{\gamma}{2}N_{\rho}}e^{\frac{\gamma}{2}M}I^{\partial}(\infty)}\right]=o(t^{-\frac{2}{\gamma^2}-\delta})
\end{equation}
where we replaced in the denominator $I^{\partial}(M)$ by $I^{\partial}(\infty)$, the upper bound follows by observing then that we can further replace in the nominator $I^{\mathrm{H}}(M)$ by $I(\infty)$ to upper bound
\begin{equation*}
    \mathbb{E}\left[\frac{\mathbf{1}_{\{\rho^{2-\frac{\gamma^2}{2}}e^{\gamma N_{\rho}}e^{\gamma M}I^{\mathrm{H}}(M)>t\}}}{\rho^{1-\frac{\gamma^2}{4}}e^{\frac{\gamma}{2}N_{\rho}}e^{\frac{\gamma}{2}M}I^{\partial}(\infty)}\right]\leq \mathbb{E}\left[\frac{\mathbf{1}_{\{\rho^{2-\frac{\gamma^2}{2}}e^{\gamma N_{\rho}}e^{\gamma M}I^{\mathrm{H}}(\infty)>t\}}}{\rho^{1-\frac{\gamma^2}{4}}e^{\frac{\gamma}{2}N_{\rho}}e^{\frac{\gamma}{2}M}I^{\partial}(\infty)}\right]=(1-\frac{\gamma^2}{4})\mathbb{E}\left[\frac{I^{\mathrm{H}}(\infty)^{\frac{2}{\gamma^2}}}{I^{\partial}(\infty)}\right]t^{-\frac{2}{\gamma^2}},
\end{equation*}
where the last equality is obtained using the explicit law of the maximum $M$ of Section~\ref{subse:explicit_calculations_on_the_maximum_of_a_drifted_brownian_motion}, similar to~\eqref{eq:UseOfExplicitLawOfM}. Notice that the last term in the above display is the desired tail coefficient of our main Theorem~\ref{th:main_result}.

The rest of this section is devoted to proving~\eqref{eq:UpperBoundApproximation}. We again follow the strategy of~\cite[Section~3.6]{Rhodes_2019} and start by defining the ``cost of replacement'' in~\eqref{eq:UpperBoundApproximation},
\begin{equation*}
    C(t)=\mathbb{E}\left[\frac{\mathbf{1}_{\{\rho^{2-\frac{\gamma^2}{2}}e^{\gamma N_{\rho}}e^{\gamma M}I^{\mathrm{H}}(M)>t\}}}{\rho^{1-\frac{\gamma^2}{4}}e^{\frac{\gamma}{2}N_{\rho}}e^{\frac{\gamma}{2}M}}\left(\frac{1}{I^{\partial}(M)}-\frac{1}{I^{\partial}(\infty)}\right)\right].
\end{equation*}
Similarly to~\eqref{eq:DefinitionB_1} but this time applied at time $L_{-M}$ instead of $L_{-\frac{\eta}{\gamma}\ln t}$, we define
\begin{equation*}
    \widehat{B}^{\gamma}_s=B^{\gamma}_{s-L_{-M}}+M
\end{equation*}
so that
\begin{equation}\label{eq:DefinitionB_2}
\begin{split}
    I^{\mathrm{H}}(\infty)-I^{\mathrm{H}}(M)&=e^{-\gamma M}\int_{-\infty}^{0}e^{\gamma\widehat{B}^{\gamma}_s}Z^{\mathrm{H}}_{s-L_{-M}}ds=e^{-\gamma M}B^{\mathrm{H}},\\
    I^{\partial}(\infty)-I^{\partial}(M)&=e^{-\frac{\gamma M}{2}}\int_{-\infty}^{0}e^{\frac{\gamma}{2}\widehat{B}^{\gamma}_s}Z^{\partial}_{s-L_{-M}}ds=e^{-\frac{\gamma M}{2}}B^{\partial}.
\end{split}
\end{equation}
We stress that $\widehat{B}, B^{\mathrm{H}}$ and $B^{\partial}$ are not the same as the previously defined ones in the proof of the lower bound, but they share similar independence properties.

To fix ideas, we choose the parameters below such that with some small $\delta>0$ and $\eta\in(0,1)$,
\begin{equation}\label{eq:Conditions_Parameters_C12}
    \frac{2}{\gamma^2}<p<\min(\frac{2}{\gamma^2}+\Delta p, \frac{2}{\gamma^2}+\frac{1}{2}, \frac{4}{\gamma^2}),\quad p(1-\eta)>\frac{2}{\gamma^2}(1-\eta)+\delta\quad\text{and}\quad \eta(\frac{2}{\gamma^2}+\frac{1}{2})>\frac{2}{\gamma^2}+\delta.
\end{equation}
One checks by elementary calculation that this is indeed possible with $\gamma\in(0,2)$ for any small enough $\Delta p>0$: it suffice to first choose $p$ slightly above $\frac{2}{\gamma^2}$, then $\eta$ slightly below $1$ and lastly fix $\delta>0$ accordingly.

We proceed by separating two cases depending on the relation between $M$ and $\frac{\eta}{\gamma}\ln t$.
\begin{enumerate}
    \item Suppose that $M>\frac{\eta}{\gamma}\ln t$ and consider
    \begin{equation*}
        C_1(t)=\mathbb{E}\left[\frac{\mathbf{1}_{\{\rho^{2-\frac{\gamma^2}{2}}e^{\gamma N_{\rho}}e^{\gamma M}I^{\mathrm{H}}(M)>t\}}}{\rho^{1-\frac{\gamma^2}{4}}e^{\frac{\gamma}{2}N_{\rho}}e^{\frac{\gamma}{2}M}}\left(\frac{1}{I^{\partial}(M)}-\frac{1}{I^{\partial}(\infty)}\right)\mathbf{1}_{\{M>\frac{\eta}{\gamma}\ln t\}}\right].
    \end{equation*}
    Adapting the proof strategy of~\cite[Equation~(3.15)]{Rhodes_2019}, we introduce the auxiliary function
    \begin{equation*}
        F(u)=\mathbb{E}\left[\frac{1}{\rho^{1-\frac{\gamma^2}{4}}e^{\frac{\gamma}{2}N_{\rho}}}\mathbf{1}_{\{\rho^{2-\frac{\gamma^2}{2}}e^{\gamma N_{\rho}}>u\}}\right]
    \end{equation*}
    so that
    \begin{equation*}
        C_1(t)\leq\mathbb{E}\left[F\left(\frac{t}{e^{\gamma M}I^{\mathrm{H}}(M)}\right)\frac{B^{\partial}}{(I^{\partial}(M))^2}\frac{1}{e^{\gamma M}}\mathbf{1}_{\{M>\frac{\eta}{\gamma}\ln t\}}\right].
    \end{equation*}
    We continue by separating again two cases according to the value of the argument $\frac{t}{e^{\gamma M}I^{\mathrm{H}}(M)}$ compared to some power of $\rho$. For this, choose a parameter $0<a<2+\frac{\gamma^2}{2}$.
    \begin{enumerate}
        \item Suppose furthermore that $\frac{t}{e^{\gamma M}I^{\mathrm{H}}(M)}\leq \rho^{a}$. In this case, simply discard the indicator function in the definition of $F(u)$ produces the upper bound
        \begin{equation*}
        \begin{split}
            C_{1'}(t)&=\mathbb{E}\left[F\left(\frac{t}{e^{\gamma M}I^{\mathrm{H}}(M)}\right)\frac{B^{\partial}}{(I^{\partial}(M))^2}\frac{1}{e^{\gamma M}}\mathbf{1}_{\{M>\frac{\eta}{\gamma}\ln t\}}\mathbf{1}_{\{\frac{t}{e^{\gamma M}I^{\mathrm{H}}(M)}\leq \rho^{a}\}}\right]\\
            &\leq\mathbb{E}\left[\frac{1}{\rho^{1-\frac{\gamma^2}{4}}e^{\frac{\gamma}{2}N_{\rho}}}\frac{B^{\partial}}{(I^{\partial}(M))^2}\frac{1}{e^{\gamma M}}\mathbf{1}_{\{M>\frac{\eta}{\gamma}\ln t\}}\mathbf{1}_{\{\frac{t}{e^{\gamma M}I^{\mathrm{H}}(M)}\leq \rho^{a}\}}\right]\\
            &\leq \rho^{-1+qa}t^{-q}\mathbb{E}\left[\frac{B^{\partial}(I^{\mathrm{H}}(M))^{q}}{(I^{\partial}(M))^2}\frac{1}{e^{(1-q)\gamma M}}\mathbf{1}_{\{M>\frac{\eta}{\gamma}\ln t\}}\right].
        \end{split}
        \end{equation*}
        for any $0<q<1$. Notice that conditioning on $M$, following the same argument below~\cite[Equation~(3.17)]{Rhodes_2019} shows that
        \begin{equation*}
            \mathbb{E}\left[\frac{B^{\partial}(I^{\mathrm{H}}(M))^{q}}{(I^{\partial}(M))^2}\mid M\right]
        \end{equation*}
        is bounded by a constant. Indeed, using Hölder's inequality with conjugate parameters $m,m'$ where $m$ is slightly below $\frac{2}{\gamma^2}$,
        \begin{equation*}
            \mathbb{E}\left[\frac{B^{\partial}(I^{\mathrm{H}}(M))^{q}}{(I^{\partial}(M))^2}\mid M\right]\leq \mathbb{E}\left[(B^{\partial})^{m}\right]^{1/m}\mathbb{E}\left[\frac{(I^{\mathrm{H}}(M))^{qm'}}{(I^{\partial}(M))^{2m'}}\mid M\right]^{1/m'}<\infty
        \end{equation*}
        where we used the independence between $B^{\partial}$ and $M$, and the fact that $B^{\partial}$ is a variant of classical Gaussian multiplicative chaos therefore has some positive moments (the moment bound is $\frac{2}{\gamma^2}$ due to the extra factor $2$ in the log-covariance), the last expectation is finite as $2(qm')<2m'$ thanks to Corollary~\ref{coro:supM} in Section~\ref{subse:some_preparations}. Plugging this in the upper bound of $C_{1'}(t)$ above yields
        \begin{equation*}
            C_{1'}(t)\leq C\rho^{-1+qa}t^{-q}\mathbb{E}\left[\frac{1}{e^{(1-q)\gamma M}}\mathbf{1}_{\{M>\frac{\eta}{\gamma}\ln t\}}\right]=C\rho^{-1+qa}t^{-(1-\eta)q}t^{-\frac{Q\eta}{\gamma}},
        \end{equation*}
        where the last equality follows from the explicit law of $M$ recalled in Section~\ref{subse:explicit_calculations_on_the_maximum_of_a_drifted_brownian_motion}. Under conditions~\eqref{eq:Conditions_Parameters_C12}, one verifies that this is indeed $\rho^{-\kappa}o(t^{-\frac{2}{\gamma^2}-\delta})$ with $\kappa<1$ and $\delta>0$.
        \item Suppose on the contrary that $\frac{t}{e^{\gamma M}I^{\mathrm{H}}(M)}>\rho^{a}$. This time, we use Girsanov's transformation to bound $F(u)$ with $u=\frac{t}{e^{\gamma M}I^{\mathrm{H}}(M)}>\rho^{a}$ as follows:
        \begin{equation*}
            F(u)=\rho^{-1}\mathbb{P}\left[\rho^{2+\frac{\gamma^2}{2}}e^{\gamma N_\rho}>u\right]\leq \rho^{-1}\mathbb{P}[e^{\gamma N_{\rho}}>\rho^{-2-\frac{\gamma^2}{2}}\rho^{a}]\leq \rho^{-1+\left(\frac{2+\frac{\gamma^2}{2}-a}{2\gamma}\right)^2}
        \end{equation*}
        if $a<2+\frac{\gamma^2}{2}$. Therefore, we get the following upper bound with $\kappa<1$,
        \begin{equation*}
        \begin{split}
            C_{1''}(t)&=\mathbb{E}\left[F\left(\frac{t}{e^{\gamma M}I^{\mathrm{H}}(M)}\right)\frac{B^{\partial}}{(I^{\partial}(M))^2}\frac{1}{e^{\gamma M}}\mathbf{1}_{\{M>\frac{\eta}{\gamma}\ln t\}}\mathbf{1}_{\{\frac{t}{e^{\gamma M}I^{\mathrm{H}}(M)}>\rho^{a}\}}\right]\\
            &\leq C{\rho}^{-\kappa}\mathbb{E}\left[\frac{B^{\partial}}{(I^{\partial}(M))^2}\frac{1}{e^{\gamma M}}\mathbf{1}_{\{M>\frac{\eta}{\gamma}\ln t\}}\right]\\
            &=C'\rho^{-\kappa}t^{-\frac{Q\eta}{\gamma}}
        \end{split}
        \end{equation*}
        with $\frac{Q\eta}{\gamma}>\frac{2}{\gamma^2}+\delta$ under conditions~\eqref{eq:Conditions_Parameters_C12}, where the last equality uses the explicit law of $M$ recalled in Section~\ref{subse:explicit_calculations_on_the_maximum_of_a_drifted_brownian_motion} and the fact that
        \begin{equation*}
            \mathbb{E}\left[\frac{B^{\partial}}{(I^{\partial}(M))^2}\mid M\right]<\infty
        \end{equation*}
        as in the previous paragraph, using Hölder's inequality and that $I^{\partial}(M)$ has all negative moments finite.
    \end{enumerate}
    The proof in the case $M>\frac{\eta}{\gamma}\ln t$ is finished using that $C_1(t)=C_{1'}(t)+C_{1''}(t)$, as the exponents in the powers of $\rho$ and $t$ are as desired.
    
    \item Suppose now that $M\leq\frac{\eta}{\gamma}\ln t$ and consider
    \begin{equation*}
        C_2(t)=\mathbb{E}\left[\frac{\mathbf{1}_{\{\rho^{2-\frac{\gamma^2}{2}}e^{\gamma N_{\rho}}e^{\gamma M}I^{\mathrm{H}}(M)>t\}}}{\rho^{1-\frac{\gamma^2}{4}}e^{\frac{\gamma}{2}N_{\rho}}e^{\frac{\gamma}{2}M}}\left(\frac{1}{I^{\partial}(M)}-\frac{1}{I^{\partial}(\infty)}\right)\mathbf{1}_{\{M\leq \frac{\eta}{\gamma}\ln t\}}\right].
    \end{equation*}
    It suffices to bound $\frac{1}{I^{\partial}(M)}-\frac{1}{I^{\partial}(\infty)}$ by $\frac{1}{I^{\partial}(M)}$ and
    \begin{equation*}
    \begin{split}
        C_2(t)&\leq\mathbb{E}\left[\frac{\mathbf{1}_{\{\rho^{2-\frac{\gamma^2}{2}}e^{\gamma N_{\rho}}e^{\gamma M}I^{\mathrm{H}}(M)>t\}}}{\rho^{1-\frac{\gamma^2}{4}}e^{\frac{\gamma}{2}N_{\rho}}e^{\frac{\gamma}{2}M}}\frac{1}{I^{\partial}(M)}\mathbf{1}_{\{M\leq \frac{\eta}{\gamma}\ln t\}}\right]\\
        &\leq t^{-p}\mathbb{E}\left[\frac{(\rho^{2-\frac{\gamma^2}{2}}e^{\gamma N_{\rho}}e^{\gamma M}I^{\mathrm{H}}(M))^{p}}{\rho^{1-\frac{\gamma^2}{4}}e^{\frac{\gamma}{2}N_{\rho}}e^{\frac{\gamma}{2}M}}\frac{1}{I^{\partial}(M)}\mathbf{1}_{\{M\leq \frac{\eta}{\gamma}\ln t\}}\right]\\
        &=t^{-p}\mathbb{E}\left[(\rho^{2-\frac{\gamma^2}{2}}e^{\gamma N_{\rho}})^{p-\frac{1}{2}}\right]\mathbb{E}\left[e^{(p-\frac{1}{2})\gamma M}\frac{I^{\mathrm{H}}(M)^{p}}{I^{\partial}(M)}\mathbf{1}_{\{M\leq \frac{\eta}{\gamma}\ln t\}}\right]\\
        &\leq C\rho^{(p-\frac{1}{2})(2-\gamma^2p)} t^{-p}\sup_{x>0}\mathbb{E}\left[\frac{I^{\mathrm{H}}(x)^{p}}{I^{\partial}(x)}\right]\mathbb{E}\left[e^{(p-\frac{1}{2})\gamma M}\mathbf{1}_{\{M\leq \frac{\eta}{\gamma}\ln t\}}\right]\\
        &=C'\rho^{(p-\frac{1}{2})(2-\gamma^2p)} t^{-p}\mathbb{E}\left[e^{(p-\frac{1}{2})\gamma M}\mathbf{1}_{\{M\leq \frac{\eta}{\gamma}\ln t\}}\right]\\
        &=C''\rho^{(p-\frac{1}{2})(2-\gamma^2p)} t^{-p}t^{\frac{\eta}{\gamma}(\gamma p-\frac{2}{\gamma})},
    \end{split}
    \end{equation*}
    where in the penultimate equality we used Corollary~\ref{coro:supM}, and in the last equality we used the explicit distribution of $M$. Under conditions~\eqref{eq:ChoiceOfParameters} and~\eqref{eq:Conditions_Parameters_C12}, the bound is $C_\rho o(t^{-\frac{2}{\gamma^2}-\delta})$ so that the $t$-coefficient is as desired. The $\rho$-coefficient is $\kappa=-(p-\frac{1}{2})(2-\gamma^2p)\in (0,1)$ when $p$ is only slightly above $\frac{2}{\gamma^2}$ as well.
\end{enumerate}

The proof of the upper bound is complete since the left hand side of~\eqref{eq:UpperBoundApproximation} is $C(t)=C_1(t)+C_2(t)$:
\begin{equation}\label{eq:Upshot_Upperbound}
    \mathbb{E}\left[\frac{\mathbf{1}_{\{\mu^{\mathrm{H}}_0(Q(0,\rho))>t\}}}{\mu^{\partial}_0(I(0,\rho))}\right]\leq (1-\frac{\gamma^2}{4})\mathbb{E}\left[\frac{I^{\mathrm{H}}(\frac{\eta}{\gamma}\ln t)^{\frac{2}{\gamma^2}}}{I^{\partial}(\infty)}\right]t^{-\frac{2}{\gamma^2}}+o(t^{-\frac{2}{\gamma^2}-\delta}).
\end{equation}
Combining~\eqref{eq:Upshot_Lowerbound} and~\eqref{eq:Upshot_Upperbound} finishes our proof of Theorem~\ref{th:main_result}.

\section{Extension to non exact-scaling kernels}\label{sec:extension_to_non_exact_scaling_kernels}
Lastly, we record the expression of the tail profile coefficient of the bulk Gaussian multiplicative chaos measure when the underlying covariance kernel of the log-correlated Gaussian field is no longer assumed exact scale-invariant.
\begin{coro}
Let $Q_r$ the Carleson cube defined in the main text, and $\mu^{H,g}$ the bulk Gaussian multiplicative chaos defined in~\eqref{eq:DefinitionBulkMeasure} with a general Neumann-type covariance kernel
\begin{equation}\label{eq:GeneralNeumannKernel}
    K(z,w)=-\ln|z-w||z-\overline{w}|+g(z,w)
\end{equation}
where $g$ is assumed locally Hölder and positive definite on $Q_r\times Q_r$. Then as $t$ goes to $\infty$,
\begin{equation*}
    \mathbb{P}[\mu^{\mathrm{H},g}(Q_r)>t]\sim C_{(1,0)}\left(\int_{-r}^{r}e^{(\frac{2}{\gamma^2}-1)g(v,v)}dv\right)t^{-\frac{2}{\gamma^2}}.
\end{equation*}
where $C_{(1,0)}$ is the constant defined in Theorem~\ref{th:main_result}.
\end{coro}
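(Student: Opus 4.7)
The plan is to reduce the general-kernel statement to Theorem~\ref{th:main_result} via a Gaussian decomposition of the field and a conditioning argument. Since $g$ is positive definite on $Q_r\times Q_r$, I decompose $X^g=X_{\mathrm{C}}+Y_g$, where $X_{\mathrm{C}}$ is the exact-scaling Neumann field associated to $K_{\mathrm{C}}$ and $Y_g$ is an independent centered Gaussian field with covariance $g$. The local Hölder regularity of $g$ combined with Kolmogorov's continuity criterion yields a Hölder continuous version of $Y_g$ on $\overline{Q_r}$. The bulk and boundary measures then factorize as
\begin{equation*}
    d\mu^{\mathrm{H},g}(z)=e^{\gamma Y_g(z)-\frac{\gamma^2}{2}g(z,z)}d\mu^{\mathrm{H}}(z),\quad d\mu^{\partial,g}(v)=e^{\frac{\gamma}{2}Y_g(v)-\frac{\gamma^2}{8}g(v,v)}d\mu^{\partial}(v),
\end{equation*}
where $\mu^{\mathrm{H}}$ and $\mu^{\partial}$ are the exact-scaling measures of Theorem~\ref{th:main_result}.

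Next, I rerun the localization-at-the-boundary scheme of Sections~\ref{sec:reduction_to_local_estimates}--\ref{sec:expression_of_the_universal_constant} with $\mu^{\partial,g}$ as tilting measure. Since the Girsanov shift of $X^g$ at a point $v\in\mathbb{R}$ equals $\frac{\gamma}{2}\mathbb{E}[X^g(\cdot)X^g(v)]=-\gamma\ln|\cdot-v|+\frac{\gamma}{2}g(\cdot,v)$, the localized measures pick up bounded extra $g$-factors:
\begin{equation*}
    \mu^{\mathrm{H},g}_v(A)=\int_{A}|z-v|^{-\gamma^2}e^{\frac{\gamma^2}{2}g(z,v)}d\mu^{\mathrm{H},g}(z),\quad \mu^{\partial,g}_v(I)=\int_{I}|w-v|^{-\frac{\gamma^2}{2}}e^{\frac{\gamma^2}{4}g(w,v)}d\mu^{\partial,g}(w).
\end{equation*}
The analogue of Lemma~\ref{lemm:getting_rid_of_the_non-singularity} carries over since $g$ and its Hölder modulus are bounded on the compact $Q_r\times Q_r$, and the joint moment inputs of~\cite{Huang:2025aa} used throughout Section~\ref{sec:reduction_to_local_estimates} are insensitive to bounded additive perturbations of the kernel.

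The heart of the argument is the local step. Conditioning on $Y_g$ and using the Hölder continuity of $g$ (and almost-surely of $Y_g$) to replace $Y_g(z),g(z,v),g(z,z)$ by their values at $v$ throughout $Q(v,\rho)$ and $I(v,\rho)$, the localized measures collapse into
\begin{equation*}
    \mu^{\mathrm{H},g}_v(Q(v,\rho))\approx e^{\gamma Y_g(v)}\mu^{\mathrm{H}}_v(Q(v,\rho)),\quad \mu^{\partial,g}_v(I(v,\rho))\approx e^{\frac{\gamma}{2}Y_g(v)+\frac{\gamma^2}{8}g(v,v)}\mu^{\partial}_v(I(v,\rho)).
\end{equation*}
Since $(\mu^{\mathrm{H}}_v,\mu^{\partial}_v)$ is independent of $Y_g(v)\sim\mathcal{N}(0,g(v,v))$, applying the sharp local asymptotics of Section~\ref{sec:expression_of_the_universal_constant} at the rescaled threshold $te^{-\gamma Y_g(v)}$ and then integrating out $Y_g(v)$ via the Gaussian moment identity $\tfrac{1}{2}(\tfrac{2}{\gamma}-\tfrac{\gamma}{2})^2=\tfrac{2}{\gamma^2}-1+\tfrac{\gamma^2}{8}$ produces the local tail coefficient
\begin{equation*}
    (1-\tfrac{\gamma^2}{4})\mathbb{E}\!\left[\frac{I^{\mathrm{H}}(\infty)^{\frac{2}{\gamma^2}}}{I^{\partial}(\infty)}\right]e^{-\frac{\gamma^2}{8}g(v,v)}\mathbb{E}\bigl[e^{(\frac{2}{\gamma}-\frac{\gamma}{2})Y_g(v)}\bigr]\,t^{-\frac{2}{\gamma^2}}=(1-\tfrac{\gamma^2}{4})\mathbb{E}\!\left[\frac{I^{\mathrm{H}}(\infty)^{\frac{2}{\gamma^2}}}{I^{\partial}(\infty)}\right]e^{(\frac{2}{\gamma^2}-1)g(v,v)}t^{-\frac{2}{\gamma^2}},
\end{equation*}
and integrating over $v\in[-r,r]$ yields the claimed asymptotics.

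The main technical obstacle is making the replacement $Y_g(z)\to Y_g(v)$ and $g(z,v)\to g(v,v)$ precise enough that the cumulative error remains $o(t^{-\frac{2}{\gamma^2}})$ after the $v$-integration. The Hölder bounds $|Y_g(z)-Y_g(v)|+|g(z,v)-g(v,v)|\leq C|z-v|^{\alpha}$ on $Q(v,\rho)$ are deterministic multiplicative perturbations of order $1+O(\rho^{\alpha})$ applied to the exact-scaling localized masses, so the error analysis of Sections~\ref{sec:reduction_to_local_estimates} and~\ref{sec:expression_of_the_universal_constant} absorbs them with only minor bookkeeping, and the Gaussian tails of $Y_g(v)$ guarantee that the extra factor $e^{\gamma(\frac{2}{\gamma^2}+\delta)Y_g(v)}$ inherited from substituting $t\to te^{-\gamma Y_g(v)}$ in the error terms remains integrable.
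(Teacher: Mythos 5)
Your proposal is correct and follows essentially the route the paper intends: the paper omits the proof, describing it as a direct adaptation of \cite[Corollary~2.4]{Rhodes_2019}, and your argument --- splitting $X^{g}=X_{\mathrm{C}}+Y_{g}$ using positive definiteness of $g$, rerunning the boundary localization of Sections~\ref{sec:reduction_to_local_estimates}--\ref{sec:expression_of_the_universal_constant}, freezing $Y_{g}$ and $g$ at the boundary insertion $v$, and integrating out $Y_{g}(v)$ via $\frac{1}{2}(\frac{2}{\gamma}-\frac{\gamma}{2})^{2}=\frac{2}{\gamma^2}-1+\frac{\gamma^2}{8}$ --- is precisely that adaptation, with the exponent arithmetic producing $e^{(\frac{2}{\gamma^2}-1)g(v,v)}$ done correctly. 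Two minor caveats: the Hölder increment bound for $Y_{g}$ is random (with Gaussian tails) rather than deterministic as you state, and your final coefficient $(1-\frac{\gamma^2}{4})\,\mathbb{E}\bigl[I^{\mathrm{H}}(\infty)^{\frac{2}{\gamma^2}}/I^{\partial}(\infty)\bigr]\int_{-r}^{r}e^{(\frac{2}{\gamma^2}-1)g(v,v)}dv$ is the internally consistent one (it reduces to Theorem~\ref{th:main_result} when $g\equiv 0$), so the factor $C_{(1,0)}$ in the corollary's display should be understood per unit boundary length, i.e.\ as $C_{(1,0)}/(2r)$.
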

The proof (omitted here) is a direct adaptation of~\cite[Corollary~2.4]{Rhodes_2019} using the techniques and estimates presented in the proof of Theorem~\ref{th:main_result}. It is likely that one can relax the assumption that $g$ is positive definite (which simplifies the proof since we can add an independent Gaussian field to $X_{\mathrm{C}}$) using techniques from~\cite{wong2020universal}: this approach will be investigated in a future work.

\begin{rema}\label{rema:Final_Remark}
Notice that the extra term $\left(\int_{-r}^{r}e^{(\frac{2}{\gamma^2}-1)g(v,v)}dv\right)$ in the exact formula of the above corollary only depends on the (diagonal) covariance structure on the boundary $v\in\mathbb{R}$, which is as expected as the main contribution to the right tail is concentrated around the boundary as discussed in Remark~\ref{rema:only_boundary_covariance_matters}.
\end{rema}

\appendix
\section{On some variants of bulk/boundary quotients of Gaussian multiplicative chaos measures}
In this appendix, we gather some variants and slightly finer estimates of the main result in~\cite{Huang:2025aa}.

We mainly follows~\cite[Section~5.6.3]{Huang:2025aa}, and uses notations are preliminary results in this companion paper. Suppose that $Q_r\subset\overline{\mathbb{H}}$ is the Carleson cube considered in the main text, $I_r=[-r,r]\subset\mathbb{R}$ its boundary with singularity, and $v\in[-r,r]$ a point where we perform the localization trick at the boundary. Without loss of generality, suppose that $v\in[0,r]$ and denote by $2\rho=r-v$ the distance of $v$ to the nearest boundary end point. Recall also that $Q(v,\rho)$ is the half disk of center $v$ and radius $\rho$ contained in $\mathbb{H}$, and $I(v,\rho)=[v-\rho,v+\rho]$. By~\cite[Theorem~2]{Huang:2025aa} or more precisely~\cite[Proposition~22]{Huang:2025aa}, for $p,q\geq 0$ and $p<\min(\frac{2}{\gamma^2}+\frac{q}{2},\frac{4}{\gamma^2})$,
\begin{equation*}
    \mathbb{E}\left[\frac{\mu^{\mathrm{H}}_\rho(Q(v,\rho)^{c})^{p}}{\mu^{\partial}_\rho(I(v,\rho)^{c})^q}\right]<\infty
\end{equation*}
where $Q(v,\rho)^{c}=Q_r\setminus Q(v,\rho)$ and $I(v,\rho)^{c}=I_r\setminus I(v,\rho)$, and $\mu^{\mathrm{H}}_\rho, \mu^{\partial}_\rho$ are respectively the localized bulk and boundary Gaussian multiplicative chaos measures.

We now study the dependence on $\rho$ of the above expression.
\begin{lemm}
Suppose that $p,q\geq 0$ and $p<\min(\frac{2}{\gamma^2}+\frac{q}{2},\frac{4}{\gamma^2})$. Then the expectation of the localized quotient has the following growth with $\rho$,
\begin{equation*}
    \mathbb{E}\left[\frac{\mu_v^{\mathrm{H}}(Q(v,\rho)^{c})^{p}}{\mu^{\partial}_v(I(v,\rho)^{c})^q}\right]=O(\rho^{\widetilde{\zeta}(p;q)}\vee 1),
\end{equation*}
where $\widetilde{\zeta}(p;q)=\left(2-\frac{\gamma^2}{2}\right)(p-\frac{q}{2})-\gamma^2(p-\frac{q}{2})^2$.
\end{lemm}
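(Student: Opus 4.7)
The plan is to combine a dyadic annular decomposition around the singular point $v$ with the scaling invariance of the exact-scaling Neumann kernel $K_{\mathrm{C}}$ of~\eqref{eq:ExactScalingNeumannKernel}. Since $K_{\mathrm{C}}(\lambda z, \lambda w) = -2\ln\lambda + K_{\mathrm{C}}(z, w)$, for any $\lambda \in (0, 1]$ one has the equality in law of processes $X_{\mathrm{C}}(\lambda\, \cdot) \overset{d}{=} \sqrt{-2\ln\lambda}\, G + \widetilde X_{\mathrm{C}}(\cdot)$, where $G \sim \mathcal{N}(0,1)$ is independent of $\widetilde X_{\mathrm{C}} \overset{d}{=} X_{\mathrm{C}}$. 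Combined with horizontal translation invariance and the explicit singularity factors $|z-v|^{-\gamma^2}$ and $|w-v|^{-\gamma^2/2}$ produced by the localization trick, this yields the key scaling identities
\begin{equation*}
    \mu^{\mathrm{H}}_v(v + \lambda B) \overset{d}{=} \lambda^{2-\gamma^2/2} e^{\gamma\sqrt{-2\ln\lambda}\, G}\, \widetilde\mu^{\mathrm{H}}_0(B), \qquad \mu^{\partial}_v(v + \lambda I) \overset{d}{=} \lambda^{1-\gamma^2/4} e^{(\gamma/2)\sqrt{-2\ln\lambda}\, G}\, \widetilde\mu^{\partial}_0(I),
\end{equation*}
with the \emph{same} Gaussian $G$ independent of the jointly distributed rescaled pair $(\widetilde\mu^{\mathrm{H}}_0, \widetilde\mu^{\partial}_0)$.

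Next, I would write $Q(v,\rho)^c$ as a disjoint union of dyadic half-annuli $A_k := (Q(v, 2^{k+1}\rho) \setminus Q(v, 2^k\rho)) \cap Q_r$ for $k = 0, 1, \ldots, K$ with $K = \lceil \log_2(r/\rho) \rceil$, and analogously $I(v,\rho)^c = \bigsqcup_k B_k$. For $p \leq 1$, subadditivity of $x \mapsto x^p$ combined with the trivial inclusion bound $\mu^{\partial}_v(I(v,\rho)^c)^q \geq \mu^{\partial}_v(B_k)^q$ gives
\begin{equation*}
    \mathbb{E}\left[\frac{\mu^{\mathrm{H}}_v(Q(v,\rho)^c)^p}{\mu^{\partial}_v(I(v,\rho)^c)^q}\right] \leq \sum_{k=0}^{K} \mathbb{E}\left[\frac{\mu^{\mathrm{H}}_v(A_k)^p}{\mu^{\partial}_v(B_k)^q}\right],
\end{equation*}
and for $p > 1$ the same reduction is obtained by applying Minkowski's inequality to the $L^p$-norm of $\mu^{\mathrm{H}}_v(Q(v,\rho)^c) / \mu^{\partial}_v(I(v,\rho)^c)^{q/p}$ before raising to the $p$-th power at the end.

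Applying the scaling identity with $\lambda = 2^k\rho$ to each summand (so that $A_k, B_k$ rescale to the standard half-annulus $Q(0,2)\setminus Q(0,1)$ and its boundary), the deterministic scaling produces $\lambda^{(2-\gamma^2/2)p - (1-\gamma^2/4)q} = \lambda^{(2-\gamma^2/2)(p-q/2)}$ and the Gaussian computation $\mathbb{E}[\exp(\gamma(p-q/2)\sqrt{-2\ln\lambda}\, G)] = \lambda^{-\gamma^2(p-q/2)^2}$, so the combined exponent is exactly $\widetilde\zeta(p;q)$. The residual expectation on the standard unit half-annulus is a fixed constant $C_{p,q}$, finite by~\cite[Proposition~22]{Huang:2025aa} under the hypothesis $p < \min(\frac{2}{\gamma^2}+\frac{q}{2},\frac{4}{\gamma^2})$. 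Summing then gives $\mathbb{E}[\cdot] \leq C_{p,q}\, \rho^{\widetilde\zeta(p;q)} \sum_{k=0}^K 2^{k\widetilde\zeta(p;q)}$: when $\widetilde\zeta(p;q) < 0$ the series converges uniformly in $K$, yielding $O(\rho^{\widetilde\zeta(p;q)})$; when $\widetilde\zeta(p;q) > 0$ the sum is dominated by its last term $\sim (r/\rho)^{\widetilde\zeta(p;q)}$, which cancels the prefactor to give $O(r^{\widetilde\zeta(p;q)}) = O(1)$. Both cases yield the announced bound $O(\rho^{\widetilde\zeta(p;q)} \vee 1)$.

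The main obstacle will be the borderline case $\widetilde\zeta(p;q) = 0$, where the geometric sum degenerates to $K + 1 \asymp \log(1/\rho)$, producing a spurious logarithmic factor not captured by the announced statement. This could be handled either by a slight perturbation of the exponents inside the allowed range to move off the critical locus, or by a finer multi-scale argument exploiting correlations between bulk and boundary mass at the same scale via Cauchy--Schwarz. A minor technical point is that when $v$ lies near an endpoint of $I_r$ (as in the hypothesis $2\rho = r-v$), the annuli $A_k, B_k$ for large $k$ are only partial intersections with $Q_r$, $I_r$, but the same scaling reasoning applies verbatim to these truncated annuli since their linear size remains $\asymp 2^k\rho$.
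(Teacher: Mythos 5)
Your scaling identities are correct (the kernel $K_{\mathrm{C}}(\lambda z,\lambda w)=-2\ln\lambda+K_{\mathrm{C}}(z,w)$ does give the joint factorization with a single Gaussian $G$ independent of the rescaled pair, and the exponent arithmetic indeed produces $\widetilde{\zeta}(p;q)$), but the block-by-block pairing $A_k\leftrightarrow B_k$ breaks down at the outer scales, and this is a genuine gap. With $2\rho=r-v$ the right portion of $B_k$ is already empty for $k\geq 1$; once $2^{k}\rho$ exceeds $v+r$ the whole block $B_k$ is empty while $A_k=(Q(v,2^{k+1}\rho)\setminus Q(v,2^k\rho))\cap Q_r$ is not, and in the transition range $2^k\rho\approx v+r$ the surviving left block $[-r,v-2^k\rho]$ has length that can be arbitrarily small compared to $2^k\rho$. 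So your closing claim that the truncated blocks keep linear size $\asymp 2^k\rho$ is false for the boundary blocks: after rescaling, the unit-scale expectation is not a uniform constant $C_{p,q}$ (it degenerates with the length of the rescaled interval, or is outright infinite when $B_k=\emptyset$). Nor can you simply H\"older-decouple numerator and denominator for those scales: the corresponding bulk pieces still contain mass arbitrarily close to the real line near the corners of $Q_r$, where $\mu^{\mathrm{H}}_v(A_k)$ has finite moments only up to $\frac{2}{\gamma^2}$, while the lemma allows (and its applications in Section~\ref{sec:reduction_to_local_estimates} use) $p>\frac{2}{\gamma^2}$. A repair is possible — pair the last few annuli with a boundary block at a boundedly mismatched scale and check a uniform unit-scale bound over that finite family of geometries (a variant of~\cite[Proposition~22]{Huang:2025aa} whose geometry is not literally the one stated there) — but this is precisely the extra work your proposal does not do.

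Two further points. The borderline $\widetilde{\zeta}(p;q)=0$ you flag cannot be fixed by ``perturbing the exponents'', since $p,q$ are given by the statement; as written your argument proves the lemma only off the critical locus, with a spurious $\log(1/\rho)$ on it. The paper avoids both issues by a different decomposition: $Q(v,\rho)^{c}$ is covered by just \emph{two} polygons, each geometrically similar to a fixed shape and paired with an adjacent boundary interval of comparable length; the $\rho$-dependence comes from a single application of the exact scaling relation for the localized quotient (\cite[Lemma~13]{Huang:2025aa}) to the scale-$\rho$ piece, giving exactly $C\rho^{\widetilde{\zeta}(p;q)}$, while the other piece is $O(1)$ because its boundary interval has length bounded below in terms of $r$. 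No dyadic sum means no geometric-series borderline and no empty-block problem, with finiteness at the reference scale supplied by~\cite[Proposition~22]{Huang:2025aa} in the same geometry treated there.
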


\begin{figure}[h]
\centering
\includegraphics[height=15em]{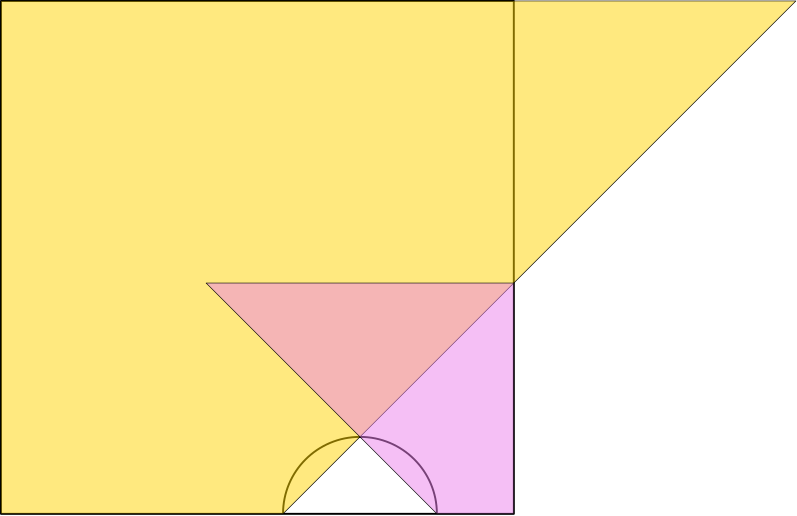}
\caption{The semi-circle $S(v,\rho)$ is centered at $v\in\mathbb{R}$ with radius $\rho=\frac{r-v}{2}$ when $v>0$. The two colored polygones pass through the top of the semi-circle of coordinate $(v,\rho)$ and cover $Q(v,\rho)^{c}=Q_r\setminus Q(v,\rho)$.}
\label{fig:FinalLemma}
\end{figure}

\begin{rema}\label{rema:FinalLemma}
In particular, for $(p,q)=(p,1)$ or $(p,q)=(p,2)$, we have $\widetilde{\zeta}(p,q)>-1$ under the condition for $p$ close to $\min(\frac{2}{\gamma^2}+\frac{q}{2},\frac{4}{\gamma^2})$: these two special cases are using in the proofs in Section~\ref{sec:reduction_to_local_estimates}. Indeed, when $q=1$, we have $\widetilde{\zeta}(p,q)>-1$ for $p\in(0,\frac{2}{\gamma^2}+\frac{1}{2})$; when $q=2$, we have $\widetilde{\zeta}(p,q)>-1$ for $p\in(\frac{1}{2},\frac{2}{\gamma^2}+1)$ and in particular $p=\frac{2}{\gamma^2}$ since $\gamma\in(0,2)$.
\end{rema}

\begin{proof}
We start by a covering of the complement $Q(v,\rho)^{c}$ by two regions as in Figure~\ref{fig:FinalLemma}, where one edge is having a $\frac{\pi}{4}$ angle with the boundary. In particular, the yellow region denoted $Q^{\mathrm{L}}_\rho$ and purple region denoted $Q^{\mathrm{R}}_\rho$ are similar, with the shortest edge of the purple region having size $\rho$. We also denote by $I^{\textrm{L}}_\rho=[-r,v]$ and $I^{\mathrm{R}}_\rho=[v,r]$. Then up to some constant depending only on $p$,
\begin{equation*}
\begin{split}
    \mathbb{E}\left[\frac{\mu^{\mathrm{H}}_v(Q(v,\rho)^{c})^{p}}{\mu^{\partial}_v(I(v,\rho)^{c})^q}\right]&\leq C\left(\mathbb{E}\left[\frac{\mu^{\mathrm{H}}_v(Q^{\mathrm{L}}_\rho)^{p}}{\mu^{\partial}_v(I(v,\rho)^{c})^q}\right]+\mathbb{E}\left[\frac{\mu^{\mathrm{H}}_v(Q^{\mathrm{R}}_\rho)^{p}}{\mu^{\partial}_v(I(v,\rho)^{c})^q}\right]\right)\\
    &\leq C\left(\mathbb{E}\left[\frac{\mu^{\mathrm{H}}_v(Q^{\mathrm{L}}_\rho)^{p}}{\mu^{\partial}_v(I^{\mathrm{L}}_\rho)^q}\right]+\mathbb{E}\left[\frac{\mu^{\mathrm{H}}_v(Q^{\mathrm{R}}_\rho)^{p}}{\mu^{\partial}_v(I^{\mathrm{R}}_\rho)^q}\right]\right).
\end{split}
\end{equation*}

We first consider the last term $\mathbb{E}\left[\frac{\mu_v^{\mathrm{H}}(Q^{\mathrm{R}}_\rho)^{p}}{\mu^{\partial}_v(I^{\mathrm{R}}_\rho)^q}\right]$. By~\cite[Proposition~22]{Huang:2025aa}, it is finite under the condition that $p,q\geq 0$ and $p<\min(\frac{2}{\gamma^2}+\frac{q}{2},\frac{4}{\gamma^2})$. Furthermore, by the exact scaling relation of the localized bulk/boundary quotient~\cite[Lemma~13]{Huang:2025aa}, its dependence on $\rho$ is exactly $C\rho^{\widetilde{\zeta}(p;q)}$ with
\begin{equation*}
    \widetilde{\zeta}(p;q)=\left(2-\frac{\gamma^2}{2}\right)(p-\frac{q}{2})-\gamma^2(p-\frac{q}{2})^2.
\end{equation*}
The first term $\mathbb{E}\left[\frac{\mu^{\mathrm{H}}_v(Q^{\mathrm{L}}_\rho)^{p}}{\mu^{\partial}_v(I^{\mathrm{L}}_\rho)^q}\right]$ can be treated similarly, but since we assumed $v\in[0,r]$, the size of $I^{\mathrm{L}}_\rho$ is greater than a global constant depending only on $r$, so that its dependence on $\rho$ is $O(1)$.

The proof is completed by inspecting the $\rho$-coefficients above.
\end{proof}


\end{document}